\numberwithin{equation}{section}
\newtheorem{theorem}{Theorem}[section]
\newtheorem*{theorem*}{Theorem}
\newtheorem{corollary}[theorem]{Corollary}
\newtheorem{lemma}[theorem]{Lemma}
\theoremstyle{definition}
\newtheorem{definition}[theorem]{Definition}
\theoremstyle{remark}
\newtheorem{remark}[theorem]{Remark}
\DeclareMathOperator{\sech}{sech}
\DeclareMathOperator{\cn}{cn}
\DeclareMathOperator{\dn}{dn}
\DeclareMathOperator{\sn}{sn}
\DeclareMathOperator{\am}{am}
\DeclareMathOperator{\sgn}{sign}
\newcommand{\R}{\mathbf{R}}
\newcommand{\N}{\mathbf{N}}
\newcommand{\Z}{\mathbf{Z}}
\newcommand{\A}{\mathcal{A}}
\newcommand{\SolnSpace}{W_\mathrm{arc}^{2,2}(0,L;\R^2)}
\newcommand{\SA}{\mathcal{A}}
\newcommand{\SB}{\mathcal{B}}
\newcommand{\SC}{\mathcal{C}}
\newcommand{\SF}{\mathcal{F}}
\newcommand{\SM}{\mathcal{M}}
\newcommand{\ef}[1]{\ensuremath{\mathsf{#1}}}
\begin{document}

\title[Uniqueness and minimality of Euler's elastica]{Uniqueness and minimality of Euler's elastica with monotone curvature}

\author{Tatsuya Miura}
\address[T.~Miura]{Department of Mathematics, Tokyo Institute of Technology, 2-12-1 Ookayama, Meguro-ku, Tokyo 152-8551, Japan; Present address: Department of Mathematics, Graduate School of Science, Kyoto University, Kitashirakawa Oiwake-cho, Sakyo-ku, Kyoto 606-8502, Japan}
\email{tatsuya.miura@math.kyoto-u.ac.jp}

\author{Glen Wheeler}
\address[G.~Wheeler]{School of Mathematics and Applied Statistics, University of Wollongong, Wollongong NSW 2522 Australia}
\email{glenw@uow.edu.au}

\date{\today}
\keywords{calculus of variations, elastica, boundary value problem, uniqueness.}
\subjclass[2020]{49Q10 and 53A04}

\begin{abstract}
    For an old problem of Euler's elastica we prove the novel global property that every planar elastica with non-constant monotone curvature is uniquely minimal subject to the clamped boundary condition.
    We also partly extend this unique minimality to the length-penalised case; this result is new even in view of local minimality.
    As an application we prove uniqueness of global minimisers in the straightening problem for generic boundary angles.
\end{abstract}

\maketitle

\section{Introduction}

Euler's elastica is the oldest variational model of elastic rods initiated by D.\ Bernoulli and L.\ Euler in the 18th century.
The problem is to study critical points of the bending energy
$$\ef{B}[\gamma]=\int_\gamma k^2ds$$
among immersed planar curves $\gamma$ under the fixed-length constraint.
Here we use $k$ to denote the signed curvature of $\gamma$, and use $s$ and $ds$ for the arc-length parameter and measure, respectively.
By the multiplier method, such a critical point satisfies for some $\lambda\in\R$ the Euler--Lagrange equation of the modified bending energy
$$\ef{E}^\lambda[\gamma]
:= \ef{B}[\gamma] + \lambda \ef{L}[\gamma]
= \int_\gamma (k^2+\lambda)\, ds,$$
where $\ef{L}$ denotes the length functional,
which takes the form
\begin{equation}
\label{EQelastica}
    2k_{ss}+k^3-\lambda k=0\,.
\end{equation}
An arc-length parametrised curve $\gamma:[0,L]\to\R^2$ is called an \emph{elastica} (or \emph{$\lambda$-elastica} to explicitly specify the multiplier $\lambda$) if the curvature satisfies \eqref{EQelastica}.
As one may imagine due to their classicality, elasticae have been the subject of a great number of works.
However, compared to the level of critical points as well as local variational properties (stability), global variational properties of those critical points have retained their mystery and are relatively poorly understood.

In this paper we address global uniqueness and minimality issues subject to the classical \emph{clamped boundary condition}, which means that the endpoints are prescribed up to first order, exactly as posed by Bernoulli.
For closed curves it follows by Cauchy--Schwarz and Fenchel inequalities that minimisers are circles (the area-constrained counterpart is however much more nontrivial, see recent works \cite{BucurHenrot2017,FeroneKawohlNitsch2016}).
In the general (non-closed) case, global minimisers always exist, but their properties are largely unknown, including their uniqueness.

Here we adopt the natural energy space of Sobolev class $W^{2,2}$.
For $L>0$ we define the set of arc-length parametrised curves of length $L$ by
$$W_\mathrm{arc}^{2,2}(0,L;\R^2):=\Big\{ \gamma \in W^{2,2}(0,L;\R^2) \,:\, |\gamma'|\equiv1 \Big\}.$$
When we say that an arc-length parametrised curve $\gamma:[0,L]\rightarrow\R^2$ is an elastica, we mean that $\gamma\in\SolnSpace$ and the curvature scalar $k$ of $\gamma$ satisfies \eqref{EQelastica} in the distributional sense (which automatically implies smoothness).
In addition, given $\gamma\in W_\mathrm{arc}^{2,2}(0,L;\R^2)$ and $\tilde{L}>0$, let $\mathcal{A}(\gamma;\tilde{L})$ denote the set of curves that have length $
\tilde{L}$ and agree with $\gamma$ at the endpoints up to first order, that is,
$$\mathcal{A}(\gamma;\tilde{L}):=\Big\{ \tilde{\gamma} \in W_\mathrm{arc}^{2,2}(0,\tilde{L};\R^2) \,:\, \text{$\tilde{\gamma}$ satisfies \eqref{eq:clamped_gamma}} \Big\},$$
where \eqref{eq:clamped_gamma} are the following clamped boundary conditions:
\begin{equation}\label{eq:clamped_gamma}
    \text{$\tilde{\gamma}(0)=\gamma(0)$, $\tilde{\gamma}(\tilde{L})=\gamma(L)$, $\tilde{\gamma}_s(0)=\gamma_s(0)$, $\tilde{\gamma}_s(\tilde{L})=\gamma_s(L)$\,.}
\end{equation}
In particular, if $\tilde{L}=L$, we simply write 
$$\mathcal{A}(\gamma):=\mathcal{A}(\gamma;L).$$

Our first main result is the following sufficient condition on unique minimality.

\begin{theorem}\label{thm:main_fixed_length}
    Let $\gamma:[0,L]\to\R^2$ be an arc-length parametrised elastica with non-constant and monotone signed curvature.
    Then $\gamma$ is the unique minimiser of the bending energy $\ef{B}$ in the set $\mathcal{A}(\gamma)$.
\end{theorem}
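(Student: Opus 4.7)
The strategy is a direct energy-difference comparison in the tangential angle representation, leading to an exact algebraic identity that makes the role of the elastica equation and the clamped/position constraints transparent. Represent $\gamma$ by a continuous lift $\theta$ of the tangent angle with $\gamma_s = (\cos\theta,\sin\theta)$, and any competitor $\tilde\gamma \in \mathcal{A}(\gamma)$ by $\tilde\theta$ chosen so that $\tilde\theta(0) = \theta(0)$. Setting $\psi := \tilde\theta - \theta \in W^{1,2}(0,L)$, the clamped data become $\psi(0) = 0$, $\psi(L) \in 2\pi\Z$, together with the two nonlinear isoperimetric conditions
\begin{equation*}
  \int_0^L (\cos\tilde\theta - \cos\theta)\,ds \;=\; \int_0^L (\sin\tilde\theta - \sin\theta)\,ds \;=\; 0 .
\end{equation*}

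Since $\gamma$ is critical for $\int_0^L \theta_s^2\,ds$ against variations preserving these two constraints, Lagrange multipliers supply constants $\lambda_1,\lambda_2 \in \R$ with $2\theta_{ss} = \lambda_1\sin\theta - \lambda_2\cos\theta$; differentiating once and comparing with \eqref{EQelastica} yields the identities $\lambda_1\cos\theta + \lambda_2\sin\theta = \lambda - k^2$ and $\lambda_1\sin\theta - \lambda_2\cos\theta = 2k_s$ along $\gamma$. Expanding $\tilde\theta_s^2 - \theta_s^2 = \psi_s^2 + 2k\psi_s$, integrating by parts, writing $\psi = \sin\psi + (\psi - \sin\psi)$, eliminating the $\sin\psi$-contributions via the two position constraints, then integrating by parts a second time and completing the square in $\psi_s$, one should arrive at the exact identity
\begin{equation*}
  \ef{B}[\tilde\gamma] - \ef{B}[\gamma] \;=\; \int_0^L \!\bigl[\psi_s + k(1-\cos\psi)\bigr]^2 ds \;+\; \int_0^L (1-\cos\psi)\bigl[\lambda - k^2(2-\cos\psi)\bigr] ds,
\end{equation*}
valid regardless of the winding class of $\psi$: the surviving boundary term $2k(L)\psi(L)$ from the first integration by parts is cancelled exactly by the boundary term $-2k(L)(\psi(L)-\sin\psi(L))$ of the second, since $\sin(2\pi n) = 0$.

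The first integral is manifestly nonnegative, so the nontrivial content of the theorem is reduced to showing that the second integral is nonnegative, with strict positivity unless $\psi \equiv 0$; this is where the monotonicity of $k$ enters decisively. The pointwise sign of $\lambda - k^2(2-\cos\psi)$ is not definite in general (the worst case $\cos\psi = -1$ gives $\lambda - 3k^2$), so the bound must be an integrated one, drawing on (i) the first integral $4k_s^2 = 4E_0 + 2\lambda k^2 - k^4$ of \eqref{EQelastica}, (ii) the change of variable $s \leftrightarrow k$ via $ds = dk/k_s$ (a diffeomorphism of $[0,L]$ onto $[k(0),k(L)]$ precisely because the monotone-curvature hypothesis forces $k_s$ to have a definite sign), and (iii) whatever information on $\psi$ remains encoded in the isoperimetric constraints after the reductions above. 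Strict monotonicity of $k$ should prevent degenerate equality cases, forcing $\psi_s + k(1-\cos\psi) \equiv 0$ and $(1-\cos\psi) \equiv 0$, hence $\psi \equiv 0$ and $\tilde\gamma = \gamma$.

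The main obstacle is precisely this integrated sign analysis. Viewed as a quadratic in $u := 1-\cos\psi \in [0,2]$, the second integrand reads $(\lambda - k^2)u - k^2 u^2$, which opens downward in $u$ and so becomes increasingly negative as $\psi$ moves away from $0$; it must be absorbed into the square term, presumably by a Poincar\'e- or Wirtinger-type inequality against $\psi_s$, with the monotonicity of $k$ used to control the absorption. I expect this balancing—organized by Euler's classification of elasticae and effected via the $k$-substitution—to be the technical heart of the paper, with the monotone-curvature hypothesis entering both to license the change of variable and to rule out the conjugate-point-like degeneracies that would otherwise invalidate the bound.
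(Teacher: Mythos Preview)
Your energy-difference identity is correct and attractive: with $\psi=\tilde\theta-\theta$ one indeed gets
\[
\ef{B}[\tilde\gamma]-\ef{B}[\gamma]=\int_0^L\bigl[\psi_s+k(1-\cos\psi)\bigr]^2\,ds+\int_0^L(1-\cos\psi)\bigl[\lambda-k^2(2-\cos\psi)\bigr]\,ds.
\]
But the proof stops precisely where the difficulty lies. The second integrand, rewritten as $(\lambda-k^2)u-k^2u^2$ with $u=1-\cos\psi\in[0,2]$, is not pointwise nonnegative: already for orbitlike elasticae one has $\lambda-k^2=2(2-m)-4\dn^2<0$ near the curvature maximum, so even the linear term in $u$ is of the wrong sign there, and the quadratic term $-k^2u^2$ is always against you. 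Your proposal to ``absorb'' this into the square term via a Poincar\'e/Wirtinger estimate is not carried out and there is no evident mechanism for it: the square term controls $\psi_s+k(1-\cos\psi)$, not $\psi_s$ alone, and you have already spent both position constraints in deriving the identity, so no further integral relation on $\psi$ is available to close the gap. The change of variables $ds=dk/k_s$ you invoke organises the $k$-dependence nicely but does nothing to constrain $\psi$, which is the free variable. In short, the plan isolates the obstacle but provides no means to overcome it.

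The paper does \emph{not} proceed along these lines at all. Instead of a direct calibration, it embeds each monotone-curvature elastica as a subarc of the unique minimiser of a tailor-made \emph{free boundary} problem: curves meeting two fixed vertical lines perpendicularly, with prescribed length. The first variation at such a free boundary yields the ``no flux'' condition $k'=0$ at the endpoints, forcing every critical point to begin and end at a vertex. Combined with the explicit classification of planar elasticae, this reduces all candidate minimisers to a one-parameter family indexed by the elliptic modulus $m$, and $m$ is then pinned down by a monotone scalar equation in complete elliptic integrals (e.g.\ $2E(m)/K(m)-1=\ell/L$ in the wavelike case). The wrong cases (orbitlike in the $\SF_\ell^+$ problem, wavelike in the $\SF_\ell^-$ problem, circles, borderline) are eliminated either by incompatibility with the vertex condition or by a ``cut-and-paste'' reflection that would create a nonsmooth minimiser. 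Once the half-period is known to be uniquely minimal in this larger free-boundary class, unique minimality of every subarc in the clamped class $\mathcal{A}(\gamma)$ follows immediately by contradiction. The borderline case (no second vertex) is handled separately with an adapted infinite-length energy. Monotonicity of $k$ enters not through any change of variable, but simply as the characterisation of subarcs of a half-period.
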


Recall that in general an elastica may not be minimal, and even if minimal, may not be unique; see e.g.\ \cite[Figure 5]{miu20} for a typical scenario.
The case of constant curvature is easier but needs to be addressed independently.
A straight segment is always uniquely minimal.
A circular arc is uniquely minimal if and only if it is less than one fold; see \cite[Appendix B]{miu20} for minimality and \cite[Theorem 2.1]{MiuraYoshizawa2024crelle} for non-minimality.
In particular, the exactly one-fold circle is minimal but not unique by reflection; Theorem \ref{thm:main_fixed_length} also rules out such symmetry-induced non-uniqueness.

Few results are previously known about sufficient conditions for global minimality.
One instance is the following fact claimed by Sachkov in \cite{sac08a}:
for any elastica $\gamma$ there is a short subarc $\gamma|_{[0,s_0]}$ which is minimal.
Our theorem gives an entirely new sufficient condition, without any implicit smallness assumption.
Not only that, our result (combined with the constant-curvature case) can be regarded as an improvement of Sachkov's claim since the subarc always has monotone curvature $k|_{[0,s_0]}$ for small $s_0$.
We also mention that Sachkov--Sachkova obtained a general restriction on the number of global minimisers \cite{SS14}.
Concerning necessary conditions, see \cite{MiuraYoshizawa2024crelle} and the references therein.

As for stability (local minimality) there are several known results.
Among them, by Born's pioneering result \cite{born06} it is classically known that if an elastica is locally strictly convex, then it is stable.
More recently, Sachkov \cite{sac08a,sac08b} developed a precise stability theory by an optimal control approach.
In particular, Sachkov's results imply that if an elastica has monotone curvature, then it is also stable.
Our theorem reveals, unexpectedly to the authors, that Sachkov's local principle directly extends to its global counterpart (up to the case of constant curvature).
Note that such a global extension does not hold for Born's principle since long orbitlike elasticae are always non-minimal.
See also \cite{Maddocks1984,LangerSinger_85} for related stability analysis of planar or spatial elasticae subject to various boundary conditions.

Our next result partly extends Theorem \ref{thm:main_fixed_length} from the length-fixed to the length-penalised problem.
For $\gamma\in W_\mathrm{arc}^{2,2}(0,L;\R^2)$, we define the set of all curves which agree with $\gamma$ at the endpoints up to first order but may have different length by
$$\widetilde{\mathcal{A}}(\gamma):=\bigcup_{\tilde{L}>0}\mathcal{A}(\gamma;\tilde{L}).$$

\begin{theorem}\label{thm:main_length_variable}
    Let $\lambda\in\R$ and $\gamma:[0,L]\to\R^2$ be an arc-length parametrised $\lambda$-elastica with non-constant and monotone signed curvature.
    If $\gamma$ is not a subcritical wavelike elastica, then $\gamma$ is the unique minimiser of the modified bending energy $\ef{E}^\lambda$ in the set $\widetilde{\mathcal{A}}(\gamma)$.
\end{theorem}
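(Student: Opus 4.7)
My proof plan is to leverage Theorem \ref{thm:main_fixed_length} by splitting the minimisation over $\widetilde{\mathcal{A}}(\gamma)$ into first optimising at fixed length $\tilde L$ and then optimising over $\tilde L>0$. Since $\gamma$ is a $\lambda$-elastica with strictly monotone curvature, the clamped-shooting map from elastica parameters to boundary data should be locally nondegenerate; the implicit function theorem then yields a smooth one-parameter family $\{\gamma_{\tilde L}\}$ of monotone-curvature $\lambda_{\tilde L}$-elasticae satisfying \eqref{eq:clamped_gamma}, parametrised by $\tilde L$ near $L$, with $\gamma_L=\gamma$ and $\lambda_L=\lambda$. Given any $\tilde\gamma\in\mathcal{A}(\gamma;\tilde L)$, Theorem \ref{thm:main_fixed_length} applied to $\gamma_{\tilde L}$ gives $\ef{B}[\tilde\gamma]\geq\ef{B}[\gamma_{\tilde L}]$, strictly unless $\tilde\gamma=\gamma_{\tilde L}$. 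Adding $\lambda\tilde L$ to both sides, the theorem reduces to proving
$$\ef{E}^\lambda[\gamma_{\tilde L}]>\ef{E}^\lambda[\gamma]\quad\text{for all admissible }\tilde L\neq L.$$

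The scalar analysis then goes through the envelope theorem: since $\gamma_{\tilde L}$ minimises $\ef{B}$ under the length constraint with Lagrange multiplier $\lambda_{\tilde L}$, one computes
$$\frac{d}{d\tilde L}\ef{E}^\lambda[\gamma_{\tilde L}]=\lambda-\lambda_{\tilde L},$$
which vanishes at $\tilde L=L$ because $\lambda_L=\lambda$. The desired strict inequality is therefore equivalent to strict monotonicity of $\tilde L\mapsto \lambda_{\tilde L}$. I would verify this by passing to the standard Jacobi elliptic parametrisation of monotone $\lambda$-elasticae, in which the type of $\gamma$ (orbitlike, wavelike, borderline, circular, or linear) is encoded by a single elliptic modulus $m$ and both the length and the multiplier are expressible as explicit elliptic integrals of the prescribed boundary data. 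A direct computation in these coordinates should yield the required sign of $d\lambda_{\tilde L}/d\tilde L$, and it is precisely here that the subcritical wavelike case must be excluded: in that regime the map fails the required monotonicity because competing wavelike arcs with the same clamped data but with a different number of inflections (and correspondingly different length) attain strictly smaller modified energy.

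The main obstacle is extending this local analysis globally in $\tilde L$. One must either analytically continue the family $\{\gamma_{\tilde L}\}$ to all admissible lengths, controlling possible type transitions between orbitlike, borderline and wavelike as $\tilde L$ varies with the boundary data held fixed, or else rule out far-away competitors a priori. A clean route for the latter is to combine the trivial lower bound $\ef{E}^\lambda[\tilde\gamma]\geq\lambda\ef{L}[\tilde\gamma]$ with a clamped Fenchel-type lower bound on $\ef{B}$ in terms of the prescribed boundary angles, so as to confine competing lengths to a compact interval and then close the argument by compactness in $\SolnSpace$ together with Theorem \ref{thm:main_fixed_length}. The regime $\lambda\leq 0$, in which $\ef{E}^\lambda$ lacks obvious coercivity in $\tilde L$, is likely the most delicate and will probably require direct geometric use of the monotone-curvature hypothesis together with the exclusion of the subcritical wavelike case to prevent escape to infinite length.
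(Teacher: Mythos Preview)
Your approach differs substantially from the paper's, which does \emph{not} reduce to Theorem~\ref{thm:main_fixed_length} plus a scalar optimisation over length. Instead, the paper sets up length-penalised free boundary problems directly (Theorems~\ref{thm:uniqminfbp} and~\ref{thm:uniqminfbporbitlike}), proving that the appropriate half-period elastica is the unique minimiser of $\ef{E}^\lambda$ in the free-boundary class $\SF_\ell^\pm$, and then deduces (Corollaries~\ref{cor:unique_wavelike_variable}, \ref{cor:unique_orbitlike_variable}, \ref{cor:unique_borderline}) that any subarc is uniquely minimal in $\widetilde{\mathcal{A}}(\gamma)$. The no-flux condition forces endpoint vertices, so uniqueness reduces to monotonicity of an explicit scalar function of the elliptic modulus $m$ alone; the clamped data never enter. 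This entirely sidesteps the need to construct or continue a family $\{\gamma_{\tilde L}\}$.

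Your plan has a genuine gap, which you yourself flag: the global construction of $\{\gamma_{\tilde L}\}$. The implicit function theorem gives it only near $\tilde L=L$, and your fallback (confine $\tilde L$ to a compact interval by coercivity, then use compactness together with Theorem~\ref{thm:main_fixed_length}) does not close the argument: compactness yields a minimiser of $\ef{E}^\lambda$ over $\widetilde{\mathcal{A}}(\gamma)$, but to identify it with $\gamma$ you would still need to know that this minimiser has monotone curvature at its own length, and Theorem~\ref{thm:main_fixed_length} does not supply that. There is also a minor misconception: by Remark~\ref{rem:multiplier}, the hypothesis ``not subcritical wavelike'' together with non-constant monotone curvature already forces $\lambda>0$, so the regime $\lambda\leq0$ you single out as delicate is vacuous here. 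Finally, the monotonicity of $\tilde L\mapsto\lambda_{\tilde L}$ that your envelope argument requires is exactly the kind of elliptic-integral computation the paper performs in Theorems~\ref{thm:uniqminfbp} and~\ref{thm:uniqminfbporbitlike}, but there it is cleaner because the free-boundary parametrisation is by $m$ rather than by the clamped data.
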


\begin{remark}
    Wavelike elastica, each determined by an elliptic parameter $m\in(0,1)$, can be either \emph{supercritical}, \emph{subcritical}, or \emph{critical}.
    The latter case is that of the figure-eight elastica with $m=m_0$, whereas the remaining two have $m$ larger or smaller than that of the figure-eight elastica.
    See Definition \ref{def:supercritical} for details.
\end{remark}

To the authors' knowledge this is the first result about global minimality in the length-penalised problem.
Note that length-penalised minimality does not follow by length-fixed minimality, while the converse does follow.
Indeed, Theorem \ref{thm:main_length_variable} requires the additional ``non-subcriticality'' assumption.
Although its optimality is not clear, if we remove it then there are counterexamples (see Remark \ref{rem:multiplier}).

In addition, Theorem \ref{thm:main_length_variable} would be new even in view of stability.
The authors are only aware of the fact that Born's argument in \cite{born06} directly works even in the length-penalised case (but only for smooth perturbations), see also \cite[Section 5.3]{miu20}.
Most of the other known stability results are based on the fixed-length constraint.

Finally, we apply Theorem \ref{thm:main_length_variable} to directly solve Euler's elastica problem.
More precisely, for the classical clamped boundary value problem, we aim at detecting global minimisers \emph{only} from given boundary data.
Given $\ell\in[0,L)$ and $\theta_0,\theta_1\in(-\pi,\pi]$, we define the (nonempty) admissible set
\begin{equation}\label{eq:admissible_clamped}
    \A^L_{\ell,\theta_0,\theta_1} := \Big\{ \gamma\in W_\mathrm{arc}^{2,2}(0,L;\R^2) \;:\; \text{$\gamma$ satisfies \eqref{eq:clamped_BC}} \Big\},
\end{equation}
where the clamped boundary condition is given by
\begin{equation}\label{eq:clamped_BC}
    \gamma(0)=
\begin{pmatrix}
    0\\ 0
\end{pmatrix},\ 
\gamma(L)=
\begin{pmatrix}
    \ell\\ 0
\end{pmatrix},\ 
\gamma_s(0)=
\begin{pmatrix}
    \cos\theta_0\\ \sin\theta_0
\end{pmatrix},\ 
\gamma_s(L)=\begin{pmatrix}
    \cos\theta_1\\ \sin\theta_1
\end{pmatrix}.
\end{equation}
This is similar to the set $\A(\gamma)$ considered earlier.
Note that this formulation does not lose generality thanks to geometric invariances.
As mentioned before, there always exists a minimiser of the bending energy $\ef{B}$ in $\A^L_{\theta_0,\theta_1,\ell}$, and also every minimiser (or indeed critical point) is a smooth elastica.
However, the minimisers are not necessarily unique, and their shape is in general hard to rigorously detect just from the given boundary data.
For results with some special boundary data, see e.g.\ \cite{miura2021optimal}, which has an application to the flow; and \cite{dall2023elastic}, in which an interesting family of Noether identities have been derived, and an ordering of the graphical minimisers is obtained (note that there is no comparison principle in general for elastica).

In \cite{miu20} the first author gave a detailed analysis of global minimisers in the \emph{straightening problem}, where $\ell$ is close to $L$, for generic boundary angles $\theta_0,\theta_1\in(-\pi,\pi)\setminus\{0\}$.
However, uniqueness was obtained only in the convex case $\theta_0\theta_1<0$ for a technical reason.
Armed with Theorem \ref{thm:main_length_variable}, now we can extend it to the remaining non-convex case $\theta_0\theta_1>0$ almost directly.
As a result, we obtain the following

\begin{theorem}\label{thm:straightening}
    Let $L>0$ and $\theta_0,\theta_1\in(-\pi,\pi)\setminus\{0\}$.
    Then there exists $\ell^*\in(0,L)$ such that for any $\ell\in(\ell^*,L)$ the bending energy $\ef{B}$ has a unique minimiser $\gamma_\ell$ in the set $\A^L_{\ell,\theta_0,\theta_1}$.
    In addition:
        \begin{itemize}
            \item[(a)] If $\theta_0\theta_1<0$, then the above $\gamma_\ell$ has strictly monotone tangential angle from $\theta_0$ to $\theta_1$; in particular, $\gamma_\ell$ has no inflection point.
            \item[(b)] If $\theta_0\theta_1>0$, then the above $\gamma_\ell$ has strictly monotone curvature with a unique zero; in particular, $\gamma_\ell$ has exactly one inflection point.
        \end{itemize}
    Finally, in the limit $\ell\to L$, the rescaled curve defined by
        \begin{equation*}
            \hat{\gamma}_\ell(s):=\frac{1}{\varepsilon_\ell}\gamma_\ell(\varepsilon_\ell s), \quad \varepsilon_\ell:=\frac{L-\ell}{4\sqrt{2}(\sin^2\frac{\theta_0}{4}+\sin^2\frac{\theta_1}{4})},
        \end{equation*}
    locally smoothly converges to the borderline elastica $\gamma_B^{\theta_0}$ defined below.
    Similar convergence behaviour holds at the other endpoint by symmetry.
\end{theorem}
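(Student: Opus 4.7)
The plan is to bootstrap Theorem \ref{thm:main_length_variable} against the straightening asymptotics from \cite{miu20}. Existence of minimisers in $\A^L_{\ell,\theta_0,\theta_1}$ is standard, and any such minimiser $\gamma_\ell$ is, via the Lagrange multiplier arising from the length constraint, a $\lambda_\ell$-elastica for some $\lambda_\ell\in\R$. From \cite{miu20} we already have the rescaled local smooth convergence $\hat\gamma_\ell\to\gamma_B^{\theta_0}$ (and its symmetric counterpart at the right endpoint) together with uniqueness in the convex case~(a), so the essentially new content is uniqueness in the non-convex case~(b), for which Theorem \ref{thm:main_length_variable} is tailor-made.

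First I would extract from the boundary-layer picture encoded by $\hat\gamma_\ell\to\gamma_B^{\theta_0}$ the qualitative shape of any minimiser $\gamma_\ell$ for $\ell$ sufficiently close to $L$. In case~(b), i.e.\ $\theta_0\theta_1>0$, the tangent angle at the left boundary layer rotates monotonically from $\theta_0$ down towards $0$ (producing curvature of one sign), while at the right boundary layer it rotates from $0$ back up to $\theta_1$ of the same sign as $\theta_0$ (producing curvature of the opposite sign). Since the curvature of any nontrivial $\lambda$-elastica has only simple zeros, this shape information promotes itself to strict monotonicity of $k_{\gamma_\ell}$ on the whole of $[0,L]$ with a unique zero, giving property~(b); property~(a) in the convex case is already established in \cite{miu20}. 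Moreover, the convergence of the rescaled profile to the borderline elastica, which is the $m\uparrow 1$ limit of the wavelike family, forces the elliptic modulus $m_\ell$ associated to $\gamma_\ell$ to satisfy $m_\ell\to 1$ as $\ell\to L$, so in particular $m_\ell>m_0$ and $\gamma_\ell$ is not a subcritical wavelike elastica for $\ell$ close enough to $L$.

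With these two qualitative facts in hand, Theorem \ref{thm:main_length_variable} applies to $\gamma_\ell$ and yields that $\gamma_\ell$ is the unique minimiser of $\ef{E}^{\lambda_\ell}$ in $\widetilde{\mathcal{A}}(\gamma_\ell)$. Since $\A^L_{\ell,\theta_0,\theta_1}\subset\widetilde{\mathcal{A}}(\gamma_\ell)$ and $\ef{E}^{\lambda_\ell}=\ef{B}+\lambda_\ell L$ differs from $\ef{B}$ only by an additive constant on curves of length~$L$, uniqueness of $\gamma_\ell$ as a minimiser of $\ef{B}$ in $\A^L_{\ell,\theta_0,\theta_1}$ follows. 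The rescaled convergence assertion in the final part of the theorem is then inherited verbatim from \cite{miu20}.

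The main obstacle I anticipate is the passage from the \emph{local} boundary-layer convergence of $\hat\gamma_\ell$ to the \emph{global} strict monotonicity of $k_{\gamma_\ell}$ on all of $[0,L]$ and to the control of $m_\ell$ landing in the non-subcritical regime. Concretely, this demands ruling out parasitic interior oscillations of $k_{\gamma_\ell}$ in the nearly-straight middle section, identifying the correct branch of the one-parameter family of $\lambda$-elasticae matching the given boundary data, and quantifying the convergence of the elliptic parameters $(\lambda_\ell,m_\ell)$ uniformly in $\ell$; once this bookkeeping is arranged the remaining application of Theorem \ref{thm:main_length_variable} is short and direct.
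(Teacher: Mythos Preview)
Your overall strategy---establish monotone curvature plus non-subcriticality for the minimiser, then invoke Theorem~\ref{thm:main_length_variable}---is exactly the paper's. The gap is in \emph{where} you apply it. You work directly with the fixed-length minimiser $\gamma_\ell$ and treat the rescaled convergence $\hat\gamma_\ell\to\gamma_B^{\theta_0}$ as an input from \cite{miu20}. But the boundary-layer results in \cite{miu20} (its Theorems~2.2 and~2.4, as cited in the paper) are formulated for minimisers of the \emph{length-penalised} energy $\ef{E}_\varepsilon$, parametrised by $\varepsilon$, not for fixed-length minimisers parametrised by $\ell$. In case~(b) the correspondence between the two problems is precisely what was missing in \cite{miu20}, so you cannot import the convergence or the shape information for $\gamma_\ell$ before that correspondence is in place. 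Your proposal is therefore circular: the convergence statement you invoke is a conclusion of the theorem, obtained only \emph{after} uniqueness.

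The paper avoids this by reversing the order. First (Lemma~\ref{lem:straightening_supercritical}) it shows that any \emph{length-penalised} minimiser $\gamma_\varepsilon$ for small $\varepsilon$ is a supercritical wavelike elastica with monotone curvature---here the \cite{miu20} analysis applies directly, and supercriticality comes from a concrete half-period bound $2\varepsilon K(m)\sqrt{4m-2}>\ell$ rather than a soft $m_\ell\to1$ argument. Theorem~\ref{thm:main_length_variable} then gives uniqueness of $\gamma_\varepsilon$ (Corollary~\ref{cor:straightening_uniqueness}). Only now is the bridge built: the length map $\varepsilon\mapsto\ef{L}[\gamma_\varepsilon]$ is well-defined, strictly increasing (Lemma~\ref{lem:length_strict}), continuous, with the correct asymptotics (Corollary~\ref{cor:length_behavior}), and its inverse $\tilde\varepsilon(L)$ yields the equivalence ``$\gamma$ minimises $\ef{B}$ in $\A^L_{\ell,\theta_0,\theta_1}$ $\Leftrightarrow$ $\gamma$ minimises $\ef{E}_{\tilde\varepsilon(L)}$ in $\A_{\theta_0,\theta_1,\ell}$''. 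All of the fixed-length assertions (uniqueness, property~(b), rescaled convergence) are then read off from the length-penalised side. The length-map step is not visible in your proposal and is the missing piece.
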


Here the \emph{borderline elastica with initial angle} $\theta_0\in(-\pi,\pi)\setminus\{0\}$ is explicitly defined by
    \begin{equation}
        \gamma_B^{\theta_0}(s):=\int_0^s
        \begin{pmatrix}
            \cos\varphi(u+u_0)\\
            (\sgn{\theta_0})\sin\varphi(u+u_0)
        \end{pmatrix}
        du, \quad \varphi(u):= 4\arctan\left(e^{-\frac{u}{\sqrt{2}}}\right),
    \label{EQborderintro}
    \end{equation}
where $u_0:=-\sqrt{2}\log(\tan\frac{|\theta_0|}{4})$, which is the unique positive number such that $\varphi(u_0)=|\theta_0|$.
This curve can be characterised as the unique arc-length parametrised elastica $\gamma:[0,\infty)\to\R^2$ with $\lambda=1$ and $\gamma(0)=(0,0)^\top$, $\gamma'(0)=(\cos\theta_0,\sin\theta_0)^\top$,  $\lim_{s\to\infty}\gamma'(s)=(1,0)^\top$.
See \cite{miu20} for details.

We emphasise that we will apply the length-variable result in Theorem \ref{thm:main_length_variable} to prove the fixed-length result in Theorem \ref{thm:straightening}.
This is because the singular limit approach developed in \cite{miu20} is heavily based on the analysis of the length-penalised problem, which is then applied to analyse the original fixed-length problem.

Now we discuss the idea of our proof, focusing on Theorem \ref{thm:main_fixed_length}.
The main tool is the classically known fact that planar elasticae are classified in terms of Jacobian elliptic integrals and functions (Theorem \ref{thm:planar_explicit}).
Although we already have this complete classification of critical points, it is almost infeasible to find all solutions to the general clamped boundary value problem by solving the associated system of transcendental equations (see e.g.\ \cite[Theorem 2.2]{Linner1998}) and also to detect global minimisers by computing their energy.
This point is one of the common difficulties in elastica theory, in spite of the availability of the classification.

Our main idea provides a new way to circumvent this common difficulty, partly inspired by previous work \cite{MR4877595}.
More precisely, we formulate and solve a tailor-made family of free boundary problems such that every elastica with non-constant monotone curvature appears (or part thereof) as the unique minimiser of one of those problems.
Our choice of free boundary conditions is given in terms of right-angle conditions on fixed parallel lines.
Then any critical point satisfies the additional so-called `no flux' boundary condition that the endpoints must be vertices (Lemma \ref{lem:noflux}).
This gives a very effective reduction of possible minimisers since in this case all the critical points are ``computable'', in the sense that they involve only complete elliptic integrals.
In this way we obtain unique explicit minimisers; along the way we also employ the recently-developed ``cut-and-paste'' trick \cite{MiuraYoshizawa2024crelle}.
By varying the free-boundary parameters we can exhaust all the elasticae under consideration, with the only exception being the borderline case.

Borderline elasticae require special treatment.
They may be regarded as the degenerate case where the horizontal intercept of one of the parallel lines diverges to infinity.
Then we encounter several additional issues.
First, it is unclear how to appropriately incorporate the length constraint since in this case admissible curves must have infinite length.
A simple idea would be to instead consider a length-penalised problem for $\ef{E}^\lambda$ but then the energy diverges since the length is infinite.
An additional difficulty is that the divergence of the free-boundary line loses one boundary condition.
To overcome those issues simultaneously, we introduce a suitably adapted energy $\tilde{\ef{E}}^\lambda$ (see Theorem \ref{thm:uniqminfbpborderline}).
This energy admits appropriate infinite-length curves as competitors, and also reflects the desired boundary condition (at infinity) in terms of its integrability.
In particular, half of a borderline elastica appears as a unique minimiser of the adapted energy $\tilde{\ef{E}}^\lambda$.
Not only that, the adapted energy is chosen so that any finite-length subarc of the minimiser of $\tilde{\ef{E}}^\lambda$ is also minimal for the original energy $\ef{E}^\lambda$.
These are the crucial points of the proof.

We finally remark that minimality and stability of spatial elasticae are significantly different from the planar case and less-understood, see e.g.\ \cite[Section 7]{MiuraYoshizawa2024crelle} and references therein.

\begin{remark}
    Another possible approach to uniqueness results as in Theorem \ref{thm:straightening} is to investigate the attainable boundary conditions for pieces of elasticae that are already known to be unique minimisers. 
    Our Theorem \ref{thm:main_fixed_length} would be useful in this approach, and we expect that, at least in case (b) of Theorem \ref{thm:straightening}, uniqueness could be directly obtained by applying Theorem \ref{thm:main_fixed_length} with an explicit estimate for $\ell^*$. 
    However, this would require detailed computations involving (incomplete) elliptic integrals and functions, so we do not enter into those details in this paper.
\end{remark}

\subsection*{Acknowledgements}
The first author is supported by JSPS KAKENHI Grant Numbers 20K14341, 21H00990, and 23H00085.
The second author was partially supported by AEGiS Advance grant number PR6064 from the University of Wollongong.
Much of this work was completed while the second author was visiting the first at the Tokyo Institute of Technology.
He is grateful for their support and kind hospitality.

\section{Preliminaries}

Let $\ell\in\R$.
We say that a curve $\gamma$ has \emph{free boundary (with horizontal extent $\ell$)} if $\gamma\in\SF_\ell^+$ or $\gamma\in\SF_\ell^-$ where
\begin{align*}
    \SF_\ell^\pm
    := \Bigg\{\gamma \in \bigcup_{L>0} \SolnSpace\,:\, 
    \begin{split}
        (\gamma(0), e_1) &= 0,\ (\gamma(L),e_1) = \ell
    \\
    \gamma'(0)&= e_1,\  \gamma'(L) = \pm e_1 
    \end{split}\ 
    \Bigg\}.
\end{align*}
Here $e_1 = (1,0)^\top$ and $e_2 = (0,1)^\top$ denote the canonical basis of $\R^2$.
The space $\SF_\ell^\pm$ does not restrict the length of its members.
It requires $\gamma(0)$ to lie on the vertical line $t\mapsto te_2$ and $\gamma(L)$ to lie on the vertical line $t\mapsto \ell e_1 + te_2$; and it requires $\gamma$ to satisfy a perpendicularity condition on these lines.
The lines support the boundary of $\gamma\in\SF_\ell^\pm$, so we call them \emph{support lines}.
Note that $\ell$ may be negative; this simply means that the line $t\mapsto \ell e_1 + te_2$ is to the left of the starting line $t\mapsto te_2$.

Now we observe that the well-known first variation formulae yield an emergent boundary condition for critical points with free boundary.
For the moment, let us consider a general (smooth) planar curve $\gamma:\bar{I}\to\R^2$, where $I = (a,b)$, which is immersed ($|\partial_x\gamma(x)| > 0$) but not necessarily in the arc-length parametrisation.
The length of $\gamma$ in its given parametrisation is
$$
    \ef{L}[\gamma] := \int_I |\partial_x\gamma(x)|\,dx.
$$
The arc-length derivative $\partial_s$ and measure $ds$ are defined by $\partial_s = |\partial_x\gamma|^{-1}\partial_x$ and $ds:=|\partial_s\gamma|dx$, respectively.
The unit tangent vector is given by $T := \partial_s\gamma$, and the unit normal by its counterclockwise $\frac{\pi}{2}$-rotation $N:=\operatorname{rot}T$.
The curvature scalar $k$ is then defined so that $\partial_s^2\gamma=kN$.
Accordingly the bending energy is given by 
$$\ef{B}[\gamma] = \int_I |\partial_s^2\gamma|^2\,ds.$$
Recall also the Frenet--Serret formula $\partial_sN=-kT$.
Hereafter we essentially only work in the arc-length parametrisation, so we  use $'$ to denote the (arc-length) derivative.

For an arc-length parametrised curve $\gamma\in W_\mathrm{arc}^{2,2}(0,L;\R^2)$, the following formula holds in any direction $\eta\in W^{2,2}(0,L;\R^2)$ (see e.g.\ \cite[Lemma A.1]{MiuraYoshizawa2024AMPA}):
\begin{equation}\label{eq:first_variation}
    \frac{d}{d\varepsilon}\bigg|_{\varepsilon=0}\ef{E}^\lambda[\gamma+\varepsilon\eta]=\int_0^L \Big( -3 k^2 (T, \eta') +2k(N, \eta'') + \lambda (T,\eta')  \Big)\,ds.
\end{equation}
If $\gamma$ is sufficiently smooth, then we can further perform integration by parts to deduce that 
\begin{align}\label{eq:first_variation_2}
    \begin{split}
        \frac{d}{d\varepsilon}\bigg|_{\varepsilon=0}\ef{E}^\lambda[\gamma+\varepsilon\eta]= &\int_0^L (2k'' + k^3-\lambda k)(N,\eta)\, ds \\
    &+ \big[ (2kN,\eta')-(2k'N+(k^2+\lambda)T,\eta) \big]_0^L.
    \end{split}
\end{align}

\begin{lemma}[No flux condition]
\label{lem:noflux}
Let $\lambda\in\R$ and $\gamma\in \SF_\ell^\pm$ be a critical point of $\ef{E}^\lambda$ in $\SF_\ell^\pm$ in the sense that the first variation \eqref{eq:first_variation} vanishes for any admissible direction $\eta\in W^{2,2}(0,L;\R^2)$ such that $(\eta(0),e_1)=(\eta(L),e_1)=(\eta'(0),e_2)=(\eta'(L),e_2)=0$.
Then $\gamma$ is a $\lambda$-elastica (and hence smooth).
In addition,
$$k'(0) = k'(L) = 0.$$
\end{lemma}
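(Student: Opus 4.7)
My plan is to follow the standard two-step strategy for deriving natural boundary conditions in a variational problem with partial boundary constraints: first use compactly supported variations to pin down the Euler--Lagrange equation in the interior, then exploit the remaining freedom at the endpoints to read off the boundary condition.

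For the first step, I would test the vanishing of \eqref{eq:first_variation} against $\eta\in C_c^\infty((0,L);\R^2)$. Every such $\eta$ trivially satisfies the four admissibility constraints at $s=0$ and $s=L$, so it is an admissible direction. The resulting distributional identity for $\gamma$ is precisely the Euler--Lagrange equation of $\ef{E}^\lambda$; a standard regularity/bootstrapping argument applied to the semilinear ODE $2k''+k^3-\lambda k=0$ promotes $\gamma$ to a smooth $\lambda$-elastica satisfying \eqref{EQelastica}. This handles the first assertion of the lemma.

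Once smoothness is available, I would invoke the integrated first variation \eqref{eq:first_variation_2}. Because $\gamma$ satisfies \eqref{EQelastica}, the interior integral vanishes and the identity collapses to
\begin{equation*}
    \bigl[(2kN,\eta') - (2k'N + (k^2+\lambda)T,\eta)\bigr]_0^L = 0
\end{equation*}
for every admissible $\eta$. The plan is then to substitute $T(0)=e_1$, $N(0)=e_2$ and $T(L)=\pm e_1$, $N(L)=\pm e_2$, and note that the admissibility constraints force $\eta(0)=\alpha_0 e_2$, $\eta(L)=\alpha_L e_2$, $\eta'(0)=\beta_0 e_1$, $\eta'(L)=\beta_L e_1$ for scalars $\alpha_0,\alpha_L,\beta_0,\beta_L$ that may be prescribed arbitrarily (admissible representatives are easily produced by polynomial cutoffs, so there is no hidden constraint). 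The $\eta'$-pairing drops because $(N,e_1)=0$ at both endpoints, and the $T$-pairing in the second term drops because $(T,e_2)=0$. What survives is a two-term expression of the form $\mp 2k'(L)\alpha_L + 2k'(0)\alpha_0=0$, and varying $\alpha_0$ and $\alpha_L$ independently yields $k'(0)=k'(L)=0$.

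There is essentially no obstacle beyond careful bookkeeping. The one point requiring mild attention is the $\gamma\in\mathcal{F}_\ell^-$ case: there $T(L)$ and $N(L)$ flip together, so the scalar products pick up matched sign changes and the coefficient of $\alpha_L$ becomes $\mp 2k'(L)$, but the conclusion $k'(L)=0$ is unaffected. Producing admissible $\eta$ realising any prescribed $(\alpha_0,\beta_0,\alpha_L,\beta_L)$ is elementary (e.g.\ two local cubic bumps near the endpoints) and introduces no genuine difficulty.
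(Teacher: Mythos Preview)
Your proposal is correct and follows essentially the same approach as the paper: first test against compactly supported $\eta$ to obtain the elastica equation and smoothness, then use the boundary identity from \eqref{eq:first_variation_2} to extract $k'(0)=k'(L)=0$. The only cosmetic difference is that the paper chooses the specific admissible direction $\eta=\phi N$ with a cutoff isolating one endpoint (so the computation is intrinsic in $T,N$), whereas you parametrise the admissible boundary data in coordinates and read off the two free coefficients; both produce the same conclusion.
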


\begin{proof}
The fact that $\gamma$ is a smooth elastica follows by a known bootstrap argument, where we only need to require \eqref{eq:first_variation} to vanish for all $\eta\in C^\infty_c(0,L;\R^2)$ (see e.g.\ \cite{MiuraYoshizawa2024AMPA}).
This means that the integral term always vanishes in \eqref{eq:first_variation_2}.
Hence by assumption, for any admissible $\eta$,
\[
[ (2kN,\eta')-(2k'N+(k^2+\lambda)T,\eta) ]_0^L=0.
\]
In particular, if we take $\eta = \phi N$, where $\phi$ is a smooth cutoff function satisfying $\phi(0) = 1$, $\phi'(0) = 0$, and $\phi(s) = 0$ for $s>\frac{L}{2}$,
then
\[
0=[ 2k\phi'-2k'\phi ]_0^L = 2k'(0),
\]
as required.
By symmetry we also have $k'(L) = 0$.
\end{proof}

One difficult aspect of Euler's elastica problem is that the bending energy has a significant degeneracy.
That is particularly relevant to us here, as our main goal is to establish uniqueness of elastica in various senses.
To be precise, any isometry $M:\R^2\rightarrow\R^2$ preserves the bending energy and the length, as does any reparametrisation $\phi:\overline{J}\mapsto\overline{I}$; we have $\ef{B}[M\circ\gamma\circ\phi] = \ef{B}[\gamma]$ and $\ef{L}[M\circ\gamma\circ\phi] = \ef{L}[\gamma]$.

We work in the arc-length parametrisation throughout, and we aim to prove uniqueness in this parametrisation.
This almost removes the degeneracy in parametrisation (only shifts and reflections remain), and so it does not represent a significant obstacle in our analysis.
However for isometries the situation is more complicated.
While both bending and length are invariant under the action of an isometry $M$, the image $\gamma(I)$ is not invariant; that is, $(M\circ\gamma)(I) \ne \gamma(I)$.
This means that many isometries will not be \emph{admissible} for our free boundary problems.
To be more precise, we have the following result.

\begin{lemma}
\label{lem:invariancesM}
The admissible subgroup $\SM$ of $\SF_\ell^\pm$-invariant isometries consists of the vertical reflection $(x_1,x_2)^\top\mapsto (x_1,-x_2)^\top$, the vertical translation $(x_1,x_2)^\top\mapsto (x_1,x_2+\alpha)^\top$ with arbitrary $\alpha\in\R$, and their compositions.
\end{lemma}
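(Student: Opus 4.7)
\smallskip

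\textbf{Proof plan.} The strategy is to write a general planar isometry as $M(x)=Rx+b$ with $R\in O(2)$ and $b\in\R^2$, and then read off necessary constraints on $R$ and $b$ from the defining boundary conditions of $\SF_\ell^\pm$. First I would exploit the tangent condition at the left endpoint: every $\gamma\in\SF_\ell^\pm$ satisfies $\gamma'(0)=e_1$, so $(M\circ\gamma)'(0)=R\gamma'(0)=Re_1$, and invariance forces $Re_1=e_1$. Since $R$ is orthogonal this immediately leaves only two possibilities, namely $R=I$ or $R=\operatorname{diag}(1,-1)$ (the reflection across the horizontal axis). Either of these is automatically compatible with the tangent condition at $L$, whether $\gamma'(L)=+e_1$ or $-e_1$, so the $\pm$ sign class is preserved in each case.

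Next I would pin down the translation vector $b$ using the position boundary conditions. The set $\SF_\ell^\pm$ contains curves with $\gamma(0)=(0,y)$ for every $y\in\R$ (one can for instance perturb straight segments at arbitrary vertical heights and suitably reconnect to satisfy the right endpoint condition, so the starting height is unconstrained). Applying $M$, the first coordinate $(R(0,y)+b,e_1)$ must vanish for every $y$; because $Re_2=\pm e_2$, this forces $b_1=0$. The corresponding condition at $s=L$ gives no further restriction: with $\gamma(L)=(\ell,z)$ and $b_1=0$ we have $(M\gamma(L),e_1)=\ell$ automatically. Thus $M$ must take one of the two forms $(x_1,x_2)\mapsto(x_1,x_2+\alpha)$ or $(x_1,x_2)\mapsto(x_1,-x_2+\alpha)$ for some $\alpha\in\R$.

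Conversely, I would verify that both families preserve $\SF_\ell^\pm$: the endpoints stay on the two prescribed vertical support lines $\{x_1=0\}$ and $\{x_1=\ell\}$, arc-length and $W^{2,2}$-regularity are preserved, and the tangent vectors at $s=0,L$ remain fixed (both $e_1$ and $-e_1$ are eigenvectors of both admissible $R$'s with eigenvalue $1$ in the first slot and $\pm 1$ in the second slot). The set of admissible $M$ is therefore exactly the subgroup generated by the vertical reflection $(x_1,x_2)\mapsto(x_1,-x_2)$ and the one-parameter family of vertical translations $(x_1,x_2)\mapsto(x_1,x_2+\alpha)$, which proves the lemma. The only point that requires a moment of care is the density argument used to force $b_1=0$, i.e.\ observing that $\SF_\ell^\pm$ already contains curves whose left endpoint realises every height on the vertical axis; once that is in place, the remainder is a routine computation in $O(2)$.
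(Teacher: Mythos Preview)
The paper states this lemma without proof, treating it as self-evident. Your argument is correct and well organised: writing $M(x)=Rx+b$ with $R\in O(2)$, the tangent condition $(M\circ\gamma)'(0)=Re_1=e_1$ pins $R$ down to $I$ or $\operatorname{diag}(1,-1)$, and the position condition at $s=0$ then forces $b_1=0$; the converse check is routine.

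One minor simplification: you do not need curves at \emph{every} height to conclude $b_1=0$. A single $\gamma\in\SF_\ell^\pm$ already suffices, because for either admissible $R$ one has $(R\gamma(0),e_1)=0$ automatically (as $\gamma(0)$ lies on the $x_2$-axis and $Re_2=\pm e_2$), so $(M\gamma(0),e_1)=b_1$ directly. This removes the only step you flagged as needing care.
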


The remaining action we wish to discuss is that of scaling.
Scaling is not an isometry, but we shall show that it nevertheless maps from an elastica to an elastica, and so it is important for us to consider.
Given a curve $\gamma$, we define its rescaling by a factor $\Lambda>0$ to be
$$\gamma_\Lambda(s) := \Lambda \gamma(s/\Lambda).$$
The division by $\Lambda$ is to ensure that $\gamma_\Lambda$ remains parametrised by arc-length.
Then straightforward calculation yields $ds_\Lambda = \Lambda ds$,  $\kappa_\Lambda = \Lambda^{-1}\kappa$, and hence
\begin{equation}\label{eq:scalingBL}
    \ef{B}[\gamma_\Lambda] = \Lambda^{-1} \ef{B}[\gamma]\ \text{and}\ 
\ef{L}[\gamma_\Lambda] = \Lambda \ef{L}[\gamma].
\end{equation}
For the energy $\ef{E}^\lambda$, this means that scaling introduces a competition between the bending and the length.
This implies the following.

\begin{lemma}
\label{lem:scaling}
Suppose $\gamma:[0,L]\rightarrow\R^2$ is a $\lambda$-elastica.
Let $\Lambda>0$.
Then the rescaled curve $\gamma_\Lambda$ is a $(\lambda\Lambda^{-2})$-elastica.
\end{lemma}

\begin{proof}
Observe that $2k_\Lambda'' + k_\Lambda^3
= \Lambda^{-3} (2k'' + k^3)
= \Lambda^{-3} \lambda k
= (\Lambda^{-2} \lambda) k_\Lambda$.
\end{proof}

One of the most remarkable achievements of classical investigation into planar elasticae was their complete classification, through the use (and in the first instance, invention) of elliptic functions.
Very roughly speaking, this follows by integrating the Euler--Lagrange equation once after multiplying by $k_s$, using the Picard--Lindel\"of theorem to obtain existence, and then explicit expressions through properties of the elliptic functions and consideration of many cases.
For more detailed arguments, see e.g.\ \cite[Appendix B]{MuellerRupp2023} or \cite{MiuraYoshizawa2024AMPA}.

To state the classification we briefly recall Jacobi elliptic integrals and functions.
For more details see classical textbooks, e.g.\ \cite[Chapter XXII]{Whittaker1962} (and also \cite{Abramowitz1992}), or see \cite[Appendix A]{Miura2024survey} which collects all the necessary properties we will use here with the same conventions and notation.
The {\em incomplete elliptic integral of the first kind} $F(x,m)$ and {\em of the second kind} $E(x,m)$ with {\em parameter} $m\in(0,1)$ (squared elliptic modulus) are defined by
\begin{equation*}
  F(x,m):=\int_0^x\frac{d\theta}{\sqrt{1-m\sin^2\theta}}, \quad E(x,m):=\int_0^x\sqrt{1-m\sin^2\theta}d\theta,
\end{equation*}
respectively.
The {\em complete elliptic integral of first kind} $K(m)$ and {\em of second kind} $E(m)$ are then defined by
$$K(m):=F(\pi/2,m), \quad E(m):=E(\pi/2,m),$$
respectively.
The {\em (Jacobi) amplitude function} is defined by
$$\am(\cdot,m):=F^{-1}(\cdot,m) \quad \text{on}\ \mathbf{R}.$$
The {\em (Jacobi) elliptic functions} are then given by
\begin{align*}
  \cn(x,m)&:=\cos(\am(x,m)),\quad \sn(x,m):=\sin(\am(x,m)),\\
  \dn(x,m)&:=\sqrt{1-m\sin^2(\am(x,m))}.
\end{align*}

\begin{theorem}[Explicit parametrisations of planar elasticae]\label{thm:planar_explicit}
  Let $\gamma:[0,L]\rightarrow\R^2$ be an elastica in $\R^2$.
  Then, up to the action of the similarity group on $\R^2$ and reparametrisation,
\[
\gamma(s)=\gamma_*(s+s_0)
\]
with $s_0\in\R$, where $\gamma_*$ is one of the following five parametrisations:
  \begin{enumerate}
    \item[\upshape(I)] \textup{(Linear elastica)}
    \begin{equation*}
      \gamma_\ell(s) =
      \begin{pmatrix}
      s  \\
      0
      \end{pmatrix}.
    \end{equation*}
    In this case, $\theta_\ell(s)=0$ and $k_\ell(s)=0$.
    \item[\upshape(II)] \textup{(Wavelike elastica)}
    There exists $m \in (0,1)$ such that
    \begin{equation*}
      \gamma_w(s)=\gamma_w(s,m) =
      \begin{pmatrix}
      2 E(\am(s,m),m )-  s  \\
      -2 \sqrt{m} \cn(s,m)
      \end{pmatrix}.
    \end{equation*}
    In this case, $\theta_w(s)=2\arcsin(\sqrt{m}\sn(s,m))$ and $k_w(s) = 2 \sqrt{m} \cn(s,m).$
    \item[\upshape(III)] \textup{(Borderline elastica)}
    \begin{equation*}
      \gamma_b(s) =
      \begin{pmatrix}
      2 \tanh{s} - s  \\
      - 2 \sech{s}
      \end{pmatrix}.
    \end{equation*}
    In this case, $\theta_b(s)=2\arcsin(\tanh{s})$ and $k_b(s) = 2\sech{s}.$
    \item[\upshape(IV)] \textup{(Orbitlike elastica)}
    There exists $m \in (0,1)$ such that
    \begin{equation*}
      \gamma_o(s)=\gamma_o(s,m) = \frac{1}{m}
      \begin{pmatrix}
      2 E(\am(s,m),m)  + (m-2)s \\
      - 2\dn(s,m)
      \end{pmatrix}.
    \end{equation*}
    In this case, $\theta_o(s)=2\am(s,m)$ and $k_o(s) = 2 \dn(s,m).$
    \item[\upshape(V)] \textup{(Circular elastica)}
    \begin{equation*}
      \gamma_c(s) =
      \begin{pmatrix}
      \cos{s}  \\
      \sin{s}
      \end{pmatrix}.
    \end{equation*}
    In this case, $\theta_c(s)=s+\pi/2$ and $k_c(s)=1$.
  \end{enumerate}
  Here the tangential angle $\theta_*$ and the curvature $k_*$ satisfy the relations $\partial_s\gamma_*=(\cos\theta_*,\sin\theta_*)^\top$ and $\partial_s\theta_*=k_*$.
\end{theorem}

\begin{remark}
    The tangential angle of the borderline elastica is twice the Gudermannian function, and thus has several equivalent expressions: $\theta_b(s)=2\am(s,1)=2\arcsin(\tanh{s})=2\arctan(\sinh{s})=4\arctan(e^s)-\pi$.
    In particular, up to isometry and reparametrisation, the curve $\gamma_B^{\theta_0}$ defined in the introduction \eqref{EQborderintro} is given by $\sqrt{2}\gamma_b(s/\sqrt{2})$.
\end{remark}

\begin{definition}\label{def:supercritical}
    A wavelike elastica is \emph{supercritical} (resp.\ \emph{critical}, \emph{subcritical}) if the corresponding parameter $m\in(0,1)$ satisfies $m>m_0$ (resp.\ $m=m_0$, $m<m_0$), where $m_0\in(0,1)$ is the unique root of the decreasing function $m\mapsto 2E(m)-K(m)$.
\end{definition}

\begin{remark}
    The wavelike elastica with $m_0\simeq 0.8261$ produces the so-called \emph{figure-eight elastica}, which is the only non-circular closed planar elastica.
\end{remark}

\begin{remark}\label{rem:multiplier}
    We can directly compute the multipliers of the base curves:
    \begin{itemize}
        \item[(I)] $\gamma_\ell$ is a $\lambda$-elastica for any $\lambda\in\R$.
        \item[(II)] $\gamma_w=\gamma_w(\cdot,m)$ is a $\lambda$-elastica for $\lambda=2(2m-1)$.
        \item[(III)] $\gamma_b$ is a $\lambda$-elastica for $\lambda=2$.
        \item[(IV)] $\gamma_o=\gamma_o(\cdot,m)$ is a $\lambda$-elastica for $\lambda=2(2-m)$.
        \item[(V)] $\gamma_c$ is a $\lambda$-elastica for $\lambda=1$.
    \end{itemize}
    Moreover, the sign of the multiplier is preserved under similarity actions.
    Therefore, the multiplier can be negative if and only if $\gamma$ is either a linear elastica or a wavelike elastica with parameter $m\in(0,\frac{1}{2})$.
    In addition, any $0$-elastica (also called \emph{free elastica}) is linear or wavelike with $m=\frac{1}{2}$.
    Since $m_0>\frac{1}{2}$ (see \cite{Miura23} for a proof), the class of subcritical wavelike elasticae contains all elasticae with non-positive multipliers $\lambda\leq0$.
    Therefore, the assumption in Theorem \ref{thm:main_length_variable} automatically implies that $\lambda>0$.
    On the other hand, for any $\lambda$-elastica $\gamma$ with negative multiplier $\lambda<0$, we can always construct a large-length competitor in $\widetilde{\mathcal{A}}(\gamma)$ such that $E_\lambda\to-\infty$, and hence $\gamma$ does not have the minimality as in Theorem \ref{thm:main_length_variable}.
\end{remark}

\section{Free boundary problems with fixed length}
\label{sec:freebdy}

In this section we prove Theorem \ref{thm:main_fixed_length}.
Non-constant monotone curvature elasticae come in three flavours: an arc of a wavelike elastica (type (II)), of a borderline elastica (type (III)), or of an orbitlike elastica (type (IV)).
For all wavelike and orbitlike elasticae, we present distinct free boundary minimisation problems that each type uniquely solves.
The borderline elastica may be regarded as a degenerate case with infinite horizontal extent.

Let us begin with the wavelike elastica.

\begin{theorem}
\label{thm:uniqminfixedL}
    Let $\ell,L\in\R$ be fixed parameters satisfying $-L<\ell<L$.
    Consider the set $\SA^L$ of admissible curves given by
    \begin{align*}
    \SA^L = \Big\{ \gamma\in \SF_\ell^+ \,:\, \ef{L}[\gamma]=L \Big\}.
    \end{align*}
    Then there exists a smooth minimiser $\gamma_{\ell,L}$ of the bending energy $\ef{B}:\SA^L\rightarrow(0,\infty)$.
    
    In addition, there is a unique parameter $m\in(0,1)$, depending only on the ratio $\ell/L$, such that any minimiser is parametrised by
    \begin{equation}\label{eq:wavelike_fixedL_parametrisation}
        \gamma_{\ell,L}(s)=\tfrac{L}{2K(m)} M\gamma_w(\tfrac{2K(m)}{L}s,m)
    \end{equation} 
    for some $M\in\SM$ (see Lemma \ref{lem:invariancesM}), where $\gamma_w$ is the wavelike elastica in Theorem \ref{thm:planar_explicit}.
    In particular, the minimiser is unique up to vertical translations and reflection.

    Moreover, the inverse of 
    \[m \mapsto \frac{2E(m)}{K(m)}-1\]
    is a strictly increasing bijective function $\phi:(-1,1)\to(0,1)$ such that $\phi(\ell/L) = m$ with $\phi(0) = m_0$.
    Further, if $\ell>0$, then $m\in(m_0,1)$; if $\ell<0$ then $m\in(0,m_0)$; and if $\ell=0$ then $m=m_0$ and hence $\gamma_{0,L}$ is (half of) the figure-eight elastica.
\end{theorem}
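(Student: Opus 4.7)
The plan is to combine a direct-method existence argument with the no-flux condition of Lemma \ref{lem:noflux} and the explicit classification of Theorem \ref{thm:planar_explicit}, then isolate the wavelike family by an energy comparison and pin down the parameter via monotonicity.

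First, I would establish existence of a minimiser of $\ef{B}$ on $\SA^L$ by the direct method. A minimising sequence $\gamma_n$ is uniformly bounded in $W^{2,2}(0,L;\R^2)$ thanks to the arc-length constraint $|\gamma_n'|\equiv 1$ and the prescribed endpoint data; up to a subsequence it converges weakly in $W^{2,2}$ and strongly in $C^1$. The arc-length constraint, the endpoint positions on the two support lines, the tangent conditions $\gamma_n'(0)=\gamma_n'(L)=e_1$, and the length $L$ all pass to the limit, while $\ef{B}$ is weakly lower semicontinuous, so a minimiser $\gamma_{\ell,L}$ exists. A standard inner-variation/bootstrap argument shows that $\gamma_{\ell,L}$ is a smooth $\lambda$-elastica for some Lagrange multiplier $\lambda\in\R$ (from the length constraint), and Lemma \ref{lem:noflux} forces $k'(0)=k'(L)=0$, i.e.\ both endpoints are vertices.

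Next, I would enumerate critical points using Theorem \ref{thm:planar_explicit} subject to the constraints $\gamma'(0)=\gamma'(L)=e_1$, $k'(0)=k'(L)=0$, $\ef{L}=L$, and the horizontal-extent condition. The linear elastica forces $\ell=L$ and is ruled out; the borderline elastica has only one vertex in its entire parametrisation, so cannot accommodate two boundary vertices at positive distance and is also ruled out; a circular arc closes horizontally and so only realises $\ell=0$, where it must be compared against the wavelike candidate. For the wavelike and orbitlike families, the tangent and no-flux conditions pin down the admissible endpoints to vertices spaced $2K(m)$ apart in their natural parametrisation. An admissible candidate is therefore an $n$-fold concatenation of such periodic segments, rescaled by $\Lambda=L/(2nK(m))$. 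The scaling identities \eqref{eq:scalingBL} plus standard elliptic-integral identities yield closed-form expressions for $\ef{B}$ and $\ell$; the horizontal extent turns out to be independent of $n$ while $\ef{B}$ scales as $n^2$, so within each family $n=1$ strictly beats every $n\geq 2$.

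The main obstacle is the energy comparison between the single-period wavelike candidate (for which $\ell/L=2E(m)/K(m)-1$) and the single-period orbitlike competitor (and, at $\ell=0$, the circular competitor) realising the same value of $\ell/L$. I plan to resolve this via the cut-and-paste trick of \cite{MYarXiv2301}: splitting a non-wavelike critical competitor at its interior vertices and rearranging or reflecting the pieces via admissible isometries from Lemma \ref{lem:invariancesM} produces a comparison curve with the same boundary, length and support-line data but strictly smaller bending energy. This forces the minimiser to lie in the single-period wavelike family, so $\gamma_{\ell,L}$ has the form \eqref{eq:wavelike_fixedL_parametrisation} for some $M\in\SM$.

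Finally, to show that $\phi$ is the inverse of $m\mapsto 2E(m)/K(m)-1$ and is a strictly increasing bijection from $(-1,1)$ onto $(0,1)$, I would compute the derivative of $m\mapsto 2E(m)/K(m)-1$ using the standard identities $\frac{d}{dm}K(m)=(E(m)-(1-m)K(m))/(2m(1-m))$ and $\frac{d}{dm}E(m)=(E(m)-K(m))/(2m)$, reducing positivity to an elementary inequality between $E$ and $K$. The limits $\ell/L\to 1$ as $m\to 0^+$ and $\ell/L\to -1$ as $m\to 1^-$ follow from the well-known asymptotics, and $\phi(0)=m_0$ is the defining equation of $m_0$. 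Combined with the $\SM$-freedom that accounts precisely for vertical translation and reflection, this yields the asserted uniqueness in the stated form.
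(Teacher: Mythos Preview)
Your overall architecture matches the paper's: direct method for existence, Lagrange multiplier and bootstrap for smoothness, the no-flux condition to force vertices at the endpoints, then elimination of cases from the classification, with the scaling argument inside the wavelike family to force $n=1$ and finally monotonicity of $m\mapsto 2E(m)/K(m)-1$ to pin down $m$. All of that is essentially what the paper does.

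The genuine gap is in your elimination of the orbitlike (and circular) competitors. You write that ``splitting a non-wavelike critical competitor at its interior vertices and rearranging or reflecting the pieces via admissible isometries \ldots\ produces a comparison curve \ldots\ with strictly smaller bending energy.'' But isometries preserve $\ef{B}$ exactly, so cut-and-paste with pieces acted on by elements of $\SM$ can never strictly lower the bending energy; at best it produces an equal-energy competitor. The paper's argument is different and relies precisely on this: for an orbitlike candidate the tangent condition $\gamma'(0)=\gamma'(L)=e_1$ (since $\theta_o(iK(m))=i\pi$) forces an \emph{even} number of half-periods, so there is an interior vertex at $s=L/2$ with tangent $-e_1$. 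Reflecting vertically the second half about that point yields an \emph{admissible} curve of the \emph{same} energy --- hence also a minimiser --- but with a curvature jump at $s=L/2$, contradicting the smoothness of minimisers. The circular case at $\ell=0$ is handled by the identical reflection-plus-regularity contradiction. You should replace your ``strictly smaller energy'' claim by this equal-energy/non-smoothness mechanism; as written, the key step does not go through.

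Two minor points: for orbitlike elasticae the vertices of $k_o=2\dn$ occur at multiples of $K(m)$, not $2K(m)$, so your ``vertices spaced $2K(m)$ apart'' is off --- it is the tangent condition that forces the even multiple. Also, in the existence step the free boundary only fixes the first coordinate of the endpoints, so a minimising sequence need not be $C^0$-bounded without first normalising by a vertical translation (e.g.\ $\gamma_j(0)=(0,0)^\top$), as the paper does.
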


\begin{proof}
The direct method in the calculus of variations implies the existence of a minimiser; for example follow the proof of \cite[Proposition 4.1]{MR4852697} with the additional argument that, without loss of generality, we can take a minimising sequence $\{\gamma_j\}$ satisfying $\gamma_j(0)=(0,0)^\top$ up to vertical translations.
Then in view of the standard multiplier method, the minimiser must be a critical point $\ef{E}^\lambda$ for some $
\lambda\in\R$ in the sense of Lemma \ref{lem:noflux}.
(See \cite[Appendix A]{MR4852697} for basic facts, and also \cite[Appendix A]{MR4877595} for a similar boundary condition.)
Hence the minimiser is a smooth elastica.

In what follows we prove uniqueness.
Any minimiser $\gamma_{\ell,L}$ must belong to the classification given by Theorem \ref{thm:planar_explicit}.
This means that it has parametrisation 
\begin{equation}
\label{eqn:generalformUniqwavelikeproof}
\gamma_{\ell,L}(s) = M\Lambda\gamma_*\big((s+s_0)\Lambda^{-1}\big),
\end{equation}
where $\Lambda>0$, $M:\R^2\to\R^2$ is an isometry, and $\gamma_*$ is one of $\gamma_\ell$, $\gamma_w$, $\gamma_b$, $\gamma_o$, or $\gamma_c$ corresponding to cases (I)--(V) in Theorem \ref{thm:planar_explicit}.
Note that the action of reparametrisation is already taken into consideration, since the speed of the parameter is fixed by arc-length parametrisation, and the orientation can be incorporated into the choices of $M$ and $s_0$ thanks to periodicity and symmetry.
This formula therefore takes into account all of the degeneracy mentioned earlier.

The goal for the remainder of the proof is to systematically reduce and simplify the freedom and form of \eqref{eqn:generalformUniqwavelikeproof}.

{\it Ruling out Case (I)}.
Linear elasticae (I) satisfying the boundary condition have $L = \ell$ and so they are not members of $\SA^L$ since here $\ell<L$.

{\it Ruling out Case (III)}.
The borderline case $\gamma_*=\gamma_b$ is also impossible since $\gamma_{\ell,L}$ needs to satisfy the no flux condition in Lemma \ref{lem:noflux} at both the endpoints, but the curvature $k_b$ of $\gamma_b$ has only one vertex.

{\it Ruling out Case (IV).}
We prove by contradiction.
Suppose that $\gamma_*=\gamma_o$.
By Lemma \ref{lem:noflux} a candidate minimiser $\gamma_{\ell,L}$ must have vertices on its boundary.
By periodicity of $k_o$, we may assume that $s_0\in\{0,\Lambda K(m)\}$, and $L=j\Lambda K(m)$ for some positive integer $j$.
In addition, since $\theta_o(iK(m))=i\pi$ holds for any integer $i$, the boundary condition $\gamma_{\ell,L}'(0)=\gamma_{\ell,L}'(L)=e_1$ implies that not only $j$ is even but also $\gamma_{\ell,L}'(\frac{L}{2})=-e_1$.
This means that, if we reflect vertically the arc $\gamma_{\ell,L}|_{[\frac{L}{2},L]}$ while keeping the former half $\gamma_{\ell,L}|_{[0,\frac{L}{2}]}$ as it is, we can construct a new (admissible) minimiser.
However this flipped curve has a curvature jump at $s=\frac{L}{2}$.
This contradicts the smoothness of minimisers.
(This cut-and-paste argument is a variant of the method developed in \cite{MiuraYoshizawa2024crelle}.)

{\it Ruling out Case (V).}
For circular elasticae, the boundary condition immediately implies that $\ell=0$ and $\gamma_{\ell,L}$ to be given by a whole circle, possibly multiply covered.
Then the same reflection argument as in case (IV) again implies a contradiction.

{\it Case (II).} The only remaining possibility is the wavelike case $\gamma_*=\gamma_w$.
By Lemma \ref{lem:noflux} and periodicity, we deduce that $s_0\in\{0,2K(m)\Lambda\}$ and $L=2j\Lambda K(m)$ for some positive integer $j$.
Up to a vertical reflection (which does not lose the admissibility) we may assume that $s_0=0$.
Recall that in general the isometry $M$ is of the form $M=A+b$
for a linear isometry $A$ and a translation vector $b$.
By the boundary condition $\gamma_{\ell,L}'(0)=e_1$, we have $A\gamma_w'(0)=e_1$, i.e., $A$ is either the identity or the vertical reflection.
(At this point $\gamma_{\ell,L}'(L)=e_1$ holds automatically.)
Then, since $(\gamma_w(0),e_1)=0$, the boundary condition $(\gamma_{\ell,L}(0),e_1)=0$ implies that $(b,e_1)=0$, i.e., $b$ is a vertical translation.
In summary, we deduce that
$$\gamma_{\ell,L}(s)=\Lambda M\gamma_w(s/\Lambda,m)$$ 
for some $M\in\SM$ and some $m\in(0,1)$, with the scaling factor $\Lambda = \frac{L}{2jK(m)}$.

Moreover, if $j\geq2$, then by periodicity the dilated subarc $\tilde{\gamma}(s):=j\gamma_{\ell,L}|_{[0,\frac{L}{j}]}(s/j)$ is still admissible and has less energy than $\gamma_{\ell,L}$ by the scaling property of the bending energy $\ef{B}$, but this contradicts the minimality of $\gamma_{\ell,L}$.
Hence $j=1$, i.e.,
$$\Lambda=L/(2K(m)).$$

Finally, we detect a unique parameter $m\in(0,1)$.
To this end we use the remaining boundary condition $(\gamma_{\ell,L}(L),e_1)=\ell$.
This condition, combined with all the obtained properties (as well as some basic properties like $\am(iK(m),m) = i\pi/2$ and $E(i\pi/2,m) = iE(m)$ for $i\in \Z$), implies that
$2\Lambda(2E(m)-K(m))=\ell.$
Thus
\begin{equation}
\label{eqn:monotoneqLfixed}
    \frac{2E(m)}{K(m)} - 1 = \frac{\ell}{L}.
\end{equation}
It is well known (and not difficult to show, see e.g.\ \cite{Yoshizawa2022,MuellerRupp2023,Miura23}) that the map $f:m\mapsto 2E(m)/K(m) - 1$ is strictly monotone and bijective from $(0,1)$ to $(-1,1)$, and has the unique root $m=m_0$.
This implies the uniqueness of $m\in(0,1)$, and also that the inverse $\phi=f^{-1}$ satisfies all the desired properties.
The proof is complete.
\end{proof}

In particular, this implies unique minimality in the wavelike case.

\begin{corollary}\label{cor:unique_wavelike}
    Let $m\in(0,1)$ and $\gamma$ be an elastica given by a subarc of a similarity image of the half-period wavelike elastica $\gamma_w(\cdot,m)|_{[0,2K(m)]}$.
    Then $\gamma$ is the unique minimiser of $\ef{B}$ in $\mathcal{A}(\gamma)$.
\end{corollary}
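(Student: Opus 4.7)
The plan is to extend any competitor $\tilde\gamma \in \mathcal{A}(\gamma)$ up to a full half-period of the underlying wavelike elastica, then invoke Theorem~\ref{thm:uniqminfixedL} at that enlarged scale. Since the appended pieces contribute the same bending energy regardless of $\tilde\gamma$, strict minimality at the half-period scale descends to strict minimality in $\mathcal{A}(\gamma)$.

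First I would reduce to a normal form. The fixed-length problem is equivariant under the full similarity group: isometries preserve $\ef{B}$ and $\ef{L}$ exactly, while by \eqref{eq:scalingBL} a scaling multiplies both sides of the inequality $\ef{B}[\tilde\gamma] \ge \ef{B}[\gamma]$ by the same factor. Hence, without loss of generality, we may assume $\gamma(s) = \gamma_w(s+a, m)$ for $s \in [0, L]$ with $0 \le a$ and $a+L \le 2K(m)$. Introduce the two ``caps''
\[
    \alpha_1 := \gamma_w|_{[0, a]}, \qquad \alpha_2 := \gamma_w|_{[a+L, 2K(m)]},
\]
and for any $\tilde\gamma \in \mathcal{A}(\gamma)$ form the arc-length-parametrised concatenation $\tilde\Gamma := \alpha_1 \ast \tilde\gamma \ast \alpha_2$ on $[0, 2K(m)]$. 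The clamped boundary conditions on $\tilde\gamma$ guarantee $C^1$ matching at the two junctions, so $\tilde\Gamma \in W^{2,2}_\mathrm{arc}(0, 2K(m); \R^2)$. Because $\theta_w(0) = \theta_w(2K(m)) = 0$, the endpoints of $\tilde\Gamma$ lie on the two vertical lines $x_1 = 0$ and $x_1 = \ell_0 := 4E(m) - 2K(m)$, with unit tangent $e_1$; hence $\tilde\Gamma \in \SA^{2K(m)}$ in the notation of Theorem~\ref{thm:uniqminfixedL}.

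The identity $\ell_0 / (2K(m)) = 2E(m)/K(m) - 1$ matches the parameter relation from that theorem, so the unique-shape parameter produced at this scale is exactly $m$, and its (up-to-$\SM$) unique minimiser is $\Gamma := \alpha_1 \ast \gamma \ast \alpha_2 = \gamma_w|_{[0, 2K(m)]}$. Since the caps are common to both concatenations,
\[
    \ef{B}[\tilde\gamma] - \ef{B}[\gamma] = \ef{B}[\tilde\Gamma] - \ef{B}[\Gamma] \ge 0.
\]
In the equality case, $\tilde\Gamma$ is also a minimiser, so Theorem~\ref{thm:uniqminfixedL} yields $\tilde\Gamma = M\Gamma$ for some $M \in \SM$. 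But the outer endpoints $\gamma_w(0) = (0, -2\sqrt{m})^\top$ and $\gamma_w(2K(m)) = (\ell_0, 2\sqrt{m})^\top$ are shared by $\Gamma$ and $\tilde\Gamma$, and they have distinct $x_2$-coordinates; the only element of $\SM$ (vertical translations composed with vertical reflection) fixing two points with different heights is the identity. Hence $\tilde\Gamma = \Gamma$ and therefore $\tilde\gamma = \gamma$.

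The main conceptual point is the cut-and-glue construction, together with the matching identity $\ell_0/(2K(m)) = 2E(m)/K(m) - 1$ that guarantees the parameter delivered by Theorem~\ref{thm:uniqminfixedL} at the enlarged scale is the same $m$ as that of $\gamma$. The degenerate cases where one or both caps are empty (i.e., $a = 0$ or $a+L = 2K(m)$) cause no trouble, since the distinct heights of the outer endpoints already kill the residual $\SM$-freedom.
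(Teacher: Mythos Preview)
Your argument is correct and essentially mirrors the paper's: both reduce to Theorem~\ref{thm:uniqminfixedL} at the full half-period scale and then kill the residual $\SM$-freedom via the observation that $(\gamma_w(0,m),e_2)\neq(\gamma_w(2K(m),m),e_2)$. The only difference is cosmetic---you extend a competitor by appending caps and compare directly, whereas the paper phrases the subarc step contrapositively (replacing the subarc to contradict unique minimality of the half-period).
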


\begin{proof}
    In Theorem \ref{thm:uniqminfixedL}, if we fix the left endpoint of minimisers, then the vertical translation is uniquely determined.
    Moreover, since
    $$(\gamma_w(0,m),e_2)=-2\sqrt{m}\neq 2\sqrt{m}=(\gamma_w(2K(m),m),e_2),$$
    if we additionally fix the right endpoint, then the vertical reflection is also uniquely determined.
    Hence the minimiser must be unique.
    Therefore, considering $L:=2K(m)$ and $\ell:=2K(m)\phi^{-1}(m)$ in Theorem \ref{thm:uniqminfixedL}, we deduce that the curve $\tilde{\gamma}:=\gamma_w(\cdot,m)|_{[0,2K(m)]}$ is the unique minimiser in $\SA(\tilde{\gamma})$.
    This also implies unique minimality of any subarc of $\tilde{\gamma}$, since otherwise we could replace the subarc with a new curve to construct an admissible curve $\xi\in\SA(\tilde{\gamma})$ such that $\xi\neq\tilde{\gamma}$ and $\ef{B}[\xi]\leq\ef{B}[\tilde{\gamma}]$, which contradicts unique minimality of $\tilde{\gamma}$.
    As the property of unique minimality is invariant with respect to similarity actions, the same is true for any similarity image of $\tilde{\gamma}$.
\end{proof}

\begin{remark}
    Note that, as similarity sends unique minimisers to unique minimisers, we may without loss of generality fix e.g. $L=1$ for these uniqueness results.
    Since this does not simplify the arguments significantly however, we have chosen not to pursue this normalisation.
\end{remark}

Next we discuss the orbitlike case.

\begin{theorem}
\label{thm:uniqminfixedLorbitcase}
    Let $\ell,L\in\R$ be fixed parameters satisfying $-L<\ell<0$.
    Consider the set $\SB^L$ of admissible curves given by
    \begin{align*}
    \SB^L = \Big\{ \gamma\in \SF_\ell^- \,:\, \ef{L}[\gamma]=L \Big\}.
    \end{align*}
    Then there exists a smooth minimiser $\gamma_{\ell,L}$ of the bending energy $\ef{B}:\SB^L\rightarrow(0,\infty)$.
    
    In addition, there is a unique parameter $m\in(0,1)$, depending only on the ratio $\ell/L$, such that any minimiser is parametrised by
    \begin{equation}\label{eq:orbitlike_fixedL_parametrisation}
        \gamma_{\ell,L}(s)=\tfrac{L}{K(m)} M\gamma_o(\tfrac{K(m)}{L}s,m)
    \end{equation}
    for some $M\in\SM$ (see Lemma \ref{lem:invariancesM}), where 
    $\gamma_o$ is the orbitlike elastica in Theorem \ref{thm:planar_explicit}.
    In particular, the minimiser is unique up to vertical translations and reflection.
    
    Moreover, the inverse of
    \[m \mapsto \frac{2E(m)}{mK(m)}+1-\frac{2}{m}\] 
    is a strictly decreasing bijective function $\psi:(-1,0)\to(0,1)$ such that $\psi(\ell/L) = m$.
\end{theorem}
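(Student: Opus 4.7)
The plan is to follow the template of the proof of Theorem \ref{thm:uniqminfixedL}, adapted to the boundary condition $\gamma'(L)=-e_1$ built into $\SF_\ell^-$ and to the sign restriction $\ell<0$. Existence of a smooth minimiser follows from the direct method (after translating a minimising sequence to pin $\gamma_j(0)=(0,0)^\top$), and the multiplier principle together with Lemma \ref{lem:noflux} forces any minimiser to be a smooth $\lambda$-elastica satisfying $k'(0)=k'(L)=0$. Theorem \ref{thm:planar_explicit} then gives a parametrisation $\gamma_{\ell,L}(s)=M\Lambda\gamma_*((s+s_0)\Lambda^{-1})$ for some $\Lambda>0$, some isometry $M$, and one of the model curves $\gamma_*\in\{\gamma_\ell,\gamma_w,\gamma_b,\gamma_o,\gamma_c\}$; the strategy is to eliminate all cases but (IV).

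The exclusion step runs as follows. (I) The linear elastica has tangent identically $e_1$, so $\gamma'(L)=-e_1$ is impossible. (II) For the wavelike elastica, $k_w'=-2\sqrt{m}\,\sn\,\dn$ vanishes exactly on $2K(m)\Z$, and $\theta_w=2\arcsin(\sqrt{m}\sn)$ equals $0$ at those points, so enforcing no-flux at both endpoints would make the two boundary tangents parallel (both mapped by the linear part of $M$ to the same unit vector), contradicting $\gamma'(0)=-\gamma'(L)$. (III) The borderline curvature $k_b=2\sech$ has only one critical point, incompatible with no-flux at two distinct endpoints. (V) For the circle, constant curvature makes no-flux automatic, but the tangent condition forces the arc to cover an odd multiple of $\pi$ radians, and integrating the tangent field yields zero horizontal displacement, contradicting $\ell<0$. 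Only the orbitlike case (IV) survives. The no-flux condition together with the zeros of $k_o'=-2m\sn\cn$ (namely $s\in K(m)\Z$) forces $s_0\in K(m)\Lambda\Z$ and $L=j\Lambda K(m)$ with $j\in\N$; without loss of generality $s_0=0$. Since $\theta_o(jK(m))=j\pi$, the condition $\gamma'(L)=-e_1$ requires $j$ odd. If $j\geq 3$, the rescaled sub-arc $\tilde{\gamma}(s)=j\,\gamma_{\ell,L}|_{[0,L/j]}(s/j)$ lies in $\SB^L$---the horizontal displacement of $\gamma_o$ is additive across half-periods, so the dilation reproduces the boundary data---and satisfies $\ef{B}[\tilde{\gamma}]=j^{-2}\ef{B}[\gamma_{\ell,L}]<\ef{B}[\gamma_{\ell,L}]$ by \eqref{eq:scalingBL}, contradicting minimality; hence $j=1$ and $\Lambda=L/K(m)$. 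The analysis of admissible isometries reduces to $M\in\SM$ by the same argument as in the wavelike case, yielding \eqref{eq:orbitlike_fixedL_parametrisation}.

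Finally, I would extract the parameter $m$ from the remaining boundary condition $(\gamma_{\ell,L}(L),e_1)=\ell$: using $\am(K(m),m)=\pi/2$ and $E(\pi/2,m)=E(m)$, a short direct computation yields
\[
\frac{\ell}{L}=\frac{2E(m)}{mK(m)}+1-\frac{2}{m}.
\]
It remains to show that the right-hand side defines a strictly decreasing bijection from $(0,1)$ onto $(-1,0)$. The endpoint limits are straightforward: as $m\to 0^+$ the expansions $E,K=\frac{\pi}{2}(1\mp m/4)+O(m^2)$ give $\ell/L\to 0$, while as $m\to 1^-$, $E\to 1$ and $K\to\infty$ give $\ell/L\to -1$. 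The strict monotonicity is of the same flavour as the corresponding fact in the wavelike theorem; I expect this to be the main obstacle, and I would address it by citing \cite{Yoshizawa2022,MuellerRupp2023,Miura23} rather than deriving it independently. This determines a unique $m$ for every $\ell/L\in(-1,0)$ and establishes all the claimed properties of $\psi$.
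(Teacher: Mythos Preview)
Your approach mirrors the paper's closely, but there are two genuine gaps.

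\textbf{The ``without loss of generality $s_0=0$'' step is not justified.} Unlike the wavelike case, where a vertical reflection interchanges the two possible half-period starting points, here the shift $s_0=K(m)\Lambda$ is \emph{not} equivalent to $s_0=0$ modulo $\SM$. If $s_0=K(m)\Lambda$, then $\gamma_o'(K(m))=-e_1$, so the linear part of $M$ must send $-e_1\mapsto e_1$; this forces $M$ to involve a horizontal reflection or a rotation by $\pi$, neither of which lies in $\SM$. The paper handles this case separately: it computes the horizontal displacement for $s_0=K(m)\Lambda$ and finds it to be $-\frac{\Lambda}{m}(2E(m)+(m-2)K(m))>0$, contradicting $\ell<0$. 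You need this sign argument (or an equivalent one) to close the case analysis.

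\textbf{The monotonicity cannot be outsourced to the cited references.} The papers \cite{Yoshizawa2022,MuellerRupp2023,Miura23} treat the wavelike function $m\mapsto 2E(m)/K(m)-1$, not the orbitlike function $f(m)=\frac{2E(m)}{mK(m)}+1-\frac{2}{m}$. The paper proves $f'<0$ by hand: using $K'=\frac{E-(1-m)K}{2m(1-m)}$ and $E'=\frac{E-K}{2m}$ one obtains
\[
f'(m)=\frac{(1-m)K(m)^2-E(m)^2}{(1-m)\,m^2K(m)^2},
\]
and then shows $g(m):=(1-m)K^2-E^2<0$ on $(0,1)$ via $g(0)=0$ and $g'(m)=-\frac{1}{m}(K-E)^2<0$. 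This short computation is the actual content of the monotonicity step and should replace your citation.

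Apart from these two points, your exclusion of cases (I)--(III), (V), your parity observation $j$ odd, and your scaling argument to force $j=1$ are all correct and essentially identical to the paper's treatment.
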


\begin{proof}
    As in the proof of Theorem \ref{thm:uniqminfixedL} the existence of a minimiser follows by
    using the direct method, and any minimiser must be given by
    \begin{equation}
    \label{eqn:generalformUniqorbitlikeproof}
    \gamma_{\ell,L}(s) = M\Lambda\gamma_*\big((s+s_0)\Lambda^{-1}\big),
    \end{equation}
    where $\Lambda>0$, $M:\R^2\to\R^2$ is an isometry, and $\gamma_*$ is one of $\gamma_\ell$, $\gamma_w$, $\gamma_b$, $\gamma_o$, or $\gamma_c$ in Theorem \ref{thm:planar_explicit}.
    We now work to reduce the space of possible minimisers.
    
    {\it Ruling out Cases (I), (III), (V)}.
    Linear elasticae do not change angle, and so
    can not be members of $\SB^L$.
    Circular elasticae do not satisfy
    the boundary condition unless $\ell=0$, so are not members of $\SB^L$ since here $\ell<0$.
    Borderline elasticae are also ruled out for the same reason as in the proof of Theorem \ref{thm:uniqminfixedL}.
    
    {\it Ruling out Case (II).}
    For wavelike elasticae, arguing as in the proof of Theorem
    \ref{thm:uniqminfixedL}, Lemma \ref{lem:noflux} implies that the vertices of
    $M\Lambda\gamma_w$ must appear at the endpoints.
    However, at all vertices the tangent vector of $M\Lambda\gamma_w$ is in a same direction, so for the boundary conditions of $\SB^L$, the wavelike case can not occur.
    
    {\it Case (IV).}
    As in the proof of Theorem \ref{thm:uniqminfixedL} for Case (II), Lemma \ref{lem:noflux} with periodicity and symmetry implies that
    $$\gamma_{\ell,L}(s) = M\Lambda \gamma_o((s+s_0)\Lambda^{-1})$$
    for some $s_0\in\{0,K(m)\Lambda\}$, some isometry $M:\R^2\to\R^2$
    such that
    \begin{itemize}
        \item if $s_0=0$, then $M\in\SM$;
        \item if $s_0=K(m)\Lambda$, then $M=M'N$ where $M'\in\SM$ and $N:(x_1,x_2)^\top\mapsto(-x_1,-x_2)^\top+\Lambda\gamma_o(K(m))$,
    \end{itemize}
    and $L=jK(m)\Lambda$ for some integer $j\geq1$.
    The scaling argument as before implies $j=1$, that is, 
    $$\Lambda=L/K(m).$$

    Now, assuming that $s_0=0$, we prove the uniqueness of $m\in(0,1)$.
    By the boundary condition $(\gamma_{\ell,L}(L),e_1)=\ell$, we compute
    $\frac{\Lambda}{m}(2E(m) + (m-2)K(m))=\ell$, and hence
    \begin{equation}
    \label{eqn:monotoneqLfixedorbitlike}
        \frac{2E(m)}{mK(m)} + 1 - \frac2m = \frac{\ell}{L}.
    \end{equation}
    We now show that the map 
    $$f:m\mapsto\frac{2E(m)}{mK(m)}+1-\frac{2}{m}$$ 
    is strictly decreasing.
    Since (let us omit the argument $m$ for brevity)
    $$K'=\frac{E-(1-m)K}{2m(1-m)}, \quad E'=\frac{E-K}{2m},$$ 
    we have $$f'=\frac{2}{m^2K^2}(mE'K-EK-mEK'+K^2)=\frac{2}{m^2K^2}\frac{(1-m)K^2-E^2}{2(1-m)}.$$ 
    It now suffices to check the negativity of $g(m):=(1-m)K(m)^2-E(m)^2$.
    Note that $g(0)=0$ as $K(0)=E(0)=\frac{\pi}{2}$.
    In addition,
    \begin{align*}
        g'&=-K^2+(1-m)2KK'-2EE'\\
        &=\frac{1}{m}(-mK^2+K(E-(1-m)K)-E(E-K))
        =-\frac{1}{m}(K-E)^2,
    \end{align*}
    which is negative for $m\in(0,1)$, and hence $g$ is negative for $m\in(0,1)$.
    Therefore $f$ is strictly decreasing on $(0,1)$, and in particular there is a unique $m\in(0,1)$ satisfying \eqref{eqn:monotoneqLfixedorbitlike}.

    Now let us show that $s_0=K(m)\Lambda$ can not occur.
    If this were the case, then
    \begin{align*}
        (\gamma_{\ell,L}(L),e_1)-(\gamma_{\ell,L}(0),e_1) &=\big( N\Lambda\gamma_o(2K(m)),e_1 \big)-\big( N\Lambda\gamma_o(K(m)),e_1 \big) \\
        &=  \Lambda\big( -\gamma_o(2K(m))+\gamma_o(K(m)),e_1 \big) - 0 \\
        &= - \frac{\Lambda}{m}( 2E(m) + (m-2)K(m)),
    \end{align*}
    which is strictly positive by the previous paragraph.
    This contradicts the boundary condition of $\SB^L$ with $\ell<0$.
    Therefore $s_0=0$ is the only possible case.

    Finally, by noting that $f=1+4\frac{E'}{K}$ and recalling that $E'(0)=-\frac{\pi}{8}$ and $K(0)=\frac{\pi}{2}$, we compute $f(0+)=0$, and also by recalling that $E(1)=1$ and $K(1-)=\infty$, we have $f(1-)=-1$ (see \cite[Corollary A.8]{Miura2024survey} for details).
    Hence $f$ is bijective from $(0,1)$ to $(-1,0)$.
    The inverse $\psi:=f^{-1}$ gives the desired map.
\end{proof}

Theorem \ref{thm:uniqminfixedLorbitcase} allows us to establish unique minimality in the orbitlike case.

\begin{corollary}\label{cor:unique_orbitlike}
    Let $m\in(0,1)$ and $\gamma$ be an elastica given by a subarc of a similarity image of the half-period orbitlike elastica $\gamma_o(\cdot,m)|_{[0,K(m)]}$.
    Then $\gamma$ is the unique minimiser of $\ef{B}$ in $\mathcal{A}(\gamma)$.
\end{corollary}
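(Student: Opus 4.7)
The plan is to mirror the proof of Corollary \ref{cor:unique_wavelike} with Theorem \ref{thm:uniqminfixedLorbitcase} replacing Theorem \ref{thm:uniqminfixedL}. First I would observe that in Theorem \ref{thm:uniqminfixedLorbitcase}, fixing the position of the left endpoint pins down the vertical translation, while fixing the position of the right endpoint additionally kills the vertical reflection, provided the $y$-coordinates at the two endpoints of the model curve differ. For $\gamma_o(\cdot,m)$ these values are
\[
(\gamma_o(0,m),e_2) = -\tfrac{2}{m}
\quad \text{and} \quad
(\gamma_o(K(m),m),e_2) = -\tfrac{2\sqrt{1-m}}{m},
\]
which are unequal for every $m\in(0,1)$. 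Hence any two minimisers from the parametrisation \eqref{eq:orbitlike_fixedL_parametrisation} that agree at both endpoints must coincide.

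Next I would apply Theorem \ref{thm:uniqminfixedLorbitcase} with $L:=K(m)$ and $\ell:=K(m)\psi^{-1}(m)$; the last sentence of that theorem guarantees $\ell\in(-L,0)$. Since $\theta_o(0)=0$ and $\theta_o(K(m))=\pi$, the half-period orbitlike curve $\tilde{\gamma}:=\gamma_o(\cdot,m)|_{[0,K(m)]}$ satisfies the tangent requirements of $\SB^L$, and after a harmless horizontal placement it lies in $\SB^L$ with horizontal extent $\ell$. Moreover $\mathcal{A}(\tilde{\gamma})\subset\SB^L$, because prescribing the endpoints exactly is strictly stronger than requiring them to lie on the two support lines. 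Theorem \ref{thm:uniqminfixedLorbitcase} supplies $\tilde{\gamma}$ as a minimiser of $\ef{B}$ over $\SB^L$, and the reflection-breaking step above upgrades this to \emph{unique} minimality over the smaller set $\mathcal{A}(\tilde{\gamma})$.

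The passage from $\tilde{\gamma}$ to an arbitrary subarc is then a one-line cut-and-paste, identical to the one used for Corollary \ref{cor:unique_wavelike}: if some subarc admitted a distinct competitor of no larger bending energy in its own clamped class, splicing this competitor into $\tilde{\gamma}$ would produce some $\xi\in\mathcal{A}(\tilde{\gamma})$ with $\xi\ne\tilde{\gamma}$ and $\ef{B}[\xi]\le\ef{B}[\tilde{\gamma}]$, contradicting the uniqueness just established. Since the property of being the unique minimiser in $\mathcal{A}(\cdot)$ is preserved by the similarity group, the statement extends to every similarity image. The only delicate point is the reflection-breaking in the first paragraph; everything else transfers verbatim from the wavelike proof.
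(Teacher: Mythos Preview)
Your proof is correct and follows essentially the same approach as the paper, which simply states that the argument ``follows along the same lines as the argument for Corollary~\ref{cor:unique_wavelike}.'' Your verification that $(\gamma_o(0,m),e_2)=-\tfrac{2}{m}\neq -\tfrac{2\sqrt{1-m}}{m}=(\gamma_o(K(m),m),e_2)$ is exactly the reflection-breaking step needed, and the subarc and similarity reductions transfer verbatim; the only superfluous remark is the ``harmless horizontal placement,'' since $(\gamma_o(0,m),e_1)=0$ already.
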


The proof of Corollary \ref{cor:unique_orbitlike} follows along the same lines as the argument for Corollary \ref{cor:unique_wavelike}.

\begin{remark}\label{rem:orbit_ell_0_positive}
    The case $0<\ell<L$ can also be covered by Theorem \ref{thm:uniqminfixedLorbitcase} up to symmetry.
    The case $\ell=0$ is slightly different but it is easy to deduce by the Cauchy--Schwarz inequality that the only minimiser is a half-circle up to invariances.
\end{remark}

Finally we address the borderline case.

\begin{theorem}
\label{thm:uniqminfbpborderline}
    Let
    \begin{align*}
    \SC^N = \bigg\{\gamma \in W^{2,2}_{\text{arc}}(0,N;\R^2)\,:\, (\gamma(0), e_1) = 0,\ \gamma'(0) = e_1\bigg\},
    \end{align*}
    and consider $\SC$ the set of admissible curves given by
    \begin{align*}
    \SC = \bigg\{\gamma:[0,\infty)\to\R^2 \,:\, \forall N>0,\  \gamma|_{[0,N]}\in \SC^N \bigg\}.
    \end{align*}
    Let $\lambda>0$.
    Consider the adapted energy
    \[
    \tilde{\ef{E}}^\lambda[\gamma] := \ef{B}[\gamma] + \lambda\int_0^\infty \big( 1 + (\gamma_s, e_1) \big)\,ds.
    \]
    Then there exists a smooth minimiser $\tilde{\gamma}$ of the adapted energy $\tilde{\ef{E}}^\lambda:\SC\to(0,\infty)$.

    In addition, any minimiser is parametrised by
    \[
    \tilde{\gamma}(s) = M\gamma_b(s)
    \]
    for some $M\in\SM$ (see Lemma \ref{lem:invariancesM}), where $\gamma_b$ is the borderline elastica in Theorem \ref{thm:planar_explicit}.
    In particular, the minimiser is unique up to vertical translations and reflection.
\end{theorem}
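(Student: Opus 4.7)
The plan is to combine the direct method with the classification of Theorem~\ref{thm:planar_explicit}, using integrability of $1+\cos\theta$ built into $\tilde{\ef{E}}^\lambda$ as a surrogate boundary condition at infinity that selects the borderline elastica. For existence, take a minimising sequence $\{\gamma_n\}\subset\SC$. Vertical translation symmetry allows normalising $\gamma_n(0)=(0,0)^\top$, and arc-length parametrisation together with $\gamma_n'(0)=e_1$ and $\ef{B}[\gamma_n]\leq C$ yield uniform $W^{2,2}$-bounds on every $[0,N]$. A diagonal extraction produces a subsequential limit $\gamma\in\SC$ with weak $W^{2,2}_{\mathrm{loc}}$ and strong $C^1_{\mathrm{loc}}$ convergence. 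Weak lower semicontinuity of $\ef{B}$ and Fatou applied to the non-negative density $1+\cos\theta_n\to 1+\cos\theta$ a.e.\ give $\tilde{\ef{E}}^\lambda[\gamma]\le\liminf_n\tilde{\ef{E}}^\lambda[\gamma_n]$; finiteness of the infimum is ensured by testing against a suitably rescaled $\gamma_b$.

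Next, since $\int_0^N(1+\cos\theta)\,ds = N+(\gamma(N),e_1)$ in arc-length, compactly supported variations $\eta\in C_c^\infty((0,\infty);\R^2)$ see no interior contribution from the length-like term, so the first variation of $\tilde{\ef{E}}^\lambda$ reduces to that of $\ef{E}^\lambda$, yielding $2k''+k^3-\lambda k=0$ on $(0,\infty)$ and smoothness of $\gamma$. Testing at $s=0$ with $\eta=\phi N$, $\phi(0)=1$, $\phi'(0)=0$, supported near $0$, exactly as in Lemma~\ref{lem:noflux} gives the no-flux condition $k'(0)=0$. By Theorem~\ref{thm:planar_explicit} and Lemma~\ref{lem:scaling}, $\gamma$ is then a similarity and parameter-shifted image of one of $\gamma_\ell$, $\gamma_w$, $\gamma_b$, $\gamma_o$, $\gamma_c$. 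Finiteness of $\int_0^\infty(1+\cos\theta)\,ds$ decides: the linear case has $\cos\theta\equiv 1$; the wavelike, circular, and orbitlike models each produce a positive non-trivial periodic density $1+\cos\theta$ in $s$, whose integral over $[0,\infty)$ diverges. Only the borderline model $\gamma_b$ survives, for which $1+\cos\theta_b=2\sech^2 s$ is integrable.

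To pin down the parametrisation, write $\gamma(s)=A\Lambda\gamma_b((s+s_0)/\Lambda)+b$ with $\Lambda=\sqrt{2/\lambda}$ forced by multiplier matching (Remark~\ref{rem:multiplier} and Lemma~\ref{lem:scaling}), $A$ a linear isometry, $b\in\R^2$. The no-flux condition $k'(0)=0$ combined with $k_b'(s)=-2\sech s\tanh s$ gives $s_0=0$; the boundary condition $Ae_1=\gamma'(0)=e_1$ restricts $A$ to the identity or the vertical reflection; and $(\gamma(0),e_1)=0$ together with $(\gamma_b(0),e_1)=0$ forces $(b,e_1)=0$. Hence $M=A+b\in\SM$, proving uniqueness up to vertical translation and reflection. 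I expect the main obstacle to be the integrability-based exclusion of the orbitlike case, which does have $\theta$ strictly monotone toward infinity but with $\cos\theta$ \emph{oscillating} rather than tending to $-1$; this is the non-trivial geometric content that makes the adapted energy an effective replacement, in the infinite-length setting, for the right-angle boundary condition on a parallel line used in Theorems~\ref{thm:uniqminfixedL} and \ref{thm:uniqminfixedLorbitcase}.
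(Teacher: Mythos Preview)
Your proof is correct and follows the same overall strategy as the paper's: direct method with a diagonal argument for existence, the observation that on bounded intervals $\tilde{\ef{E}}^\lambda$ and $\ef{E}^\lambda$ differ only by a boundary term so that the minimiser is a $\lambda$-elastica, exclusion of the non-borderline types via finiteness of the energy, and then boundary data to pin down the parametrisation.

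Two small differences in execution are worth recording. First, the paper excludes cases (II), (IV), (V) by noting that each has nonzero periodic curvature, so $\ef{B}[\gamma]=\int_0^\infty k^2\,ds=\infty$; this is quicker than your argument via the second term and sidesteps entirely the orbitlike concern you flag at the end (your argument there is also fine, since $1+\cos\theta_o=2\cn^2(s,m)$ is periodic with positive mean, but the bending route makes the point with no computation). Second, you fix $s_0=0$ via the no-flux condition $k'(0)=0$ together with the fact that $k_b'$ has a unique zero, which is arguably cleaner; the paper instead combines the initial condition $\gamma'(0)=e_1$ with finiteness of the second term as $s\to\infty$ (forcing $Ae_1=e_1$, then $\theta_b(s_0/\Lambda)=0$, hence $s_0=0$). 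Both routes arrive at the same conclusion.
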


\begin{proof}
    To begin, observe that a half-period $\tilde\gamma_b = \gamma_b|_{[0,\infty)}$ of the borderline elastica is an element of $\SC$ and has finite energy $\tilde{\ef{E}}^\lambda[\tilde\gamma_b] < \infty$.
    This can be explicitly calculated using $\theta_b(s) = 2\arcsin(\tanh s)$ and $k_b(s) = 2\sech(s)$.
    Note for later that $k_b'(0) = 0$ and this is the only vertex of $\gamma_b$.

    Now we proceed with a direct method.
    Take a minimising sequence $\{\gamma_j-\gamma_j(0)\}$ for $\tilde{\ef{E}}^\lambda$ on the (non-empty) admissible set $\SC$.
    We prove that there exists a subsequence that is $W^{2,2}$-weakly convergent to a limit $\tilde \gamma\in\SC$ on every bounded interval $K\subset\subset[0,\infty)$.
    In order to see this, we use a diagonal-type argument.
    Let $\tilde{\gamma}_j:=\gamma_j-\gamma_j(0)$, and 
    first consider the restricted sequence $\{\tilde{\gamma}_j|_{[0,1]}\}$.
    This is uniformly bounded in $W^{2,2}(0,1;\R^2)$ by the energy bound $\|\tilde{\gamma}_j''|_{[0,1]}\|_{L^2}^2=\ef{B}[\tilde{\gamma}_j]\leq\tilde{\ef{E}}^\lambda[\tilde{\gamma}_j]$ and by arc-length parametrisation.
    So it has a weakly convergent subsequence (that we use the same index for, abusing notation).
    Suppose we found a subsequence $\{\tilde{\gamma}_j\}$ weakly converging on $[0,N]$ for some $N\in\N$.
    Consider now the restriction $\{\tilde{\gamma}_j|_{[0,N+1]}\}$.
    This is uniformly bounded in $W^{2,2}(0,N+1;\R^2)$ and so has a (further) subsequence that converges weakly.
    Taking the diagonal sequence, we find the desired limit $\tilde{\gamma}\in\SC$.
    Lower semicontinuity of $\tilde{\ef{E}}^\lambda$ implies that for any $N$, the restriction
    $$\tilde{\gamma}^N := \tilde{\gamma}|_{[0,N]}$$
    satisfies
    $$\tilde{\ef{E}}^\lambda[\tilde{\gamma}^N] \leq \liminf_{j\to\infty}\tilde{\ef{E}}^\lambda[\tilde{\gamma}_j|_{[0,N]}] \leq \liminf_{j\to\infty}\tilde{\ef{E}}^\lambda[\tilde{\gamma}_j] = \inf_{\SC}\tilde{\ef{E}}^\lambda.$$
    Since $\lim_{N\to\infty}\tilde{\ef{E}}^\lambda[\tilde{\gamma}^N]=\tilde{\ef{E}}^\lambda[\tilde{\gamma}]$ by the monotone convergence theorem, we find that $\tilde{\gamma}$ is a minimiser of $\tilde{\ef{E}}^\lambda$ in $\SC$.

    Let $\tilde\gamma$ be any minimiser.
    Then the first variation \eqref{eq:first_variation} vanishes at least
    for all smooth perturbations $\eta:[0,\infty)\to\R^2$ with compact support such that $(\eta(0),e_1) = (\eta'(0),e_2) = 0$.
    Taking an appropriate family of functions $\eta\in C^\infty_c(0,\infty;\R^2)$ with bounded support shows that for every $[0,N]\subset\subset[0,\infty)$ the restriction
    $\tilde{\gamma}^N$ 
    is critical for $\tilde{\ef{E}}^\lambda$.
    Calculating, for any $\xi:[0,N]\to\R^2$,
    \begin{align*}
    \tilde{\ef{E}}^\lambda[\xi] &= \ef{B}[\xi]+\lambda\ef{L}[\xi] + \lambda\int_0^{N} \partial_s(\xi,e_1)\,ds
    \\
    &= \ef{B}[\xi]+\lambda\ef{L}[\xi] + \lambda(\xi(N) - \xi(0),e_1),
    \end{align*}
    which means that the first variation for $\tilde{\ef{E}}^\lambda$ with respect to perturbations of bounded support is equal to that for $\ef{B}+\lambda\ef{L}$.
    Thus $\tilde{\gamma}^N$ is also critical for $\ef{B}+\lambda\ef{L}$.
    Since $N$ is arbitrary, the (infinite-length) curve $\tilde{\gamma}$ must be a $\lambda$-elastica, and it must fit into the classification in Theorem \ref{thm:planar_explicit}.

    First, we note that a similarity image of case (III) is a possible candidate for $\tilde{\gamma}$.
    All the other cases can be ruled out by the uniform energy boundedness $\tilde{\ef{E}}^\lambda[\tilde{\gamma}]<\infty$.
    Indeed, case (I) is clearly excluded as the second term of $\tilde{\ef{E}}^\lambda$ is unbounded for any straight line in $\SC$. 
    All the remaining cases (II), (IV), (V) have nonzero periodic curvature and thus have unbounded bending energy as well.

    Thus $\tilde{\gamma}$ must be a similarity image of the borderline elastica $\gamma_b$.
    Since $\gamma_b$ is a $2$-elastica (see Remark \ref{rem:multiplier}) the scaling factor $\Lambda>0$ needs to satisfy $\lambda=2\Lambda^{-2}$ (see Lemma \ref{lem:scaling}) and hence for some isometry $M$,
    \[
    \tilde{\gamma}(s) = \sqrt{\tfrac{2}{\lambda}}M \gamma_b\Big(\sqrt{\tfrac{\lambda}{2}}(s+s_0)\Big)\,.
    \]
    By using the boundary condition of $\SC$ at $s=0$ and finiteness of the second term of $\tilde{\ef{E}}^\lambda[\tilde{\gamma}]$ (as a boundary condition at $s=\infty$) we deduce that $M\in\SM$ and $s_0=0$.
\end{proof}

This settles the borderline case not only for the length-fixed but also the length-variable problem.

\begin{corollary}\label{cor:unique_borderline}
    Let $\lambda>0$ and $\gamma$ be any $\lambda$-elastica given by a (finite length) subarc of a similarity image of the half-period borderline elastica $\gamma_b|_{[0,\infty)}$.
    Then $\gamma$ is the unique minimiser of $\ef{E}^\lambda$ in $\widetilde{\mathcal{A}}(\gamma)$.
    
    In particular, $\gamma$ is also the unique minimiser of $\ef{B}$ in $\mathcal{A}(\gamma)$.
\end{corollary}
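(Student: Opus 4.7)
The plan is to reduce the corollary to Theorem \ref{thm:uniqminfbpborderline} via a cut-and-paste construction that embeds any competitor in $\widetilde{\mathcal{A}}(\gamma)$ into a competitor for the adapted energy $\tilde{\ef{E}}^\lambda$ on $\SC$, in a way that preserves the energy difference exactly. The decisive feature of $\tilde{\ef{E}}^\lambda$ for this argument is that its second term depends only on length and horizontal endpoint displacement, and so factorises cleanly along a concatenation. Concretely, by Lemma \ref{lem:scaling} and Remark \ref{rem:multiplier} I fix $\Lambda=\sqrt{2/\lambda}$, and by applying an isometry---under which both $\ef{E}^\lambda$ and the admissible class $\widetilde{\mathcal{A}}(\gamma)$ transform covariantly---I may assume $\gamma=\gamma^\star|_{[a,b]}$ for some $0\le a<b<\infty$, where $\gamma^\star(s):=\Lambda\gamma_b(s/\Lambda)$, $s\in[0,\infty)$, is a $\lambda$-elastica lying in $\SC$.

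Given any competitor $\xi\in\widetilde{\mathcal{A}}(\gamma)$ of length $L_\xi$, I form the arc-length concatenation $\tilde\xi := \gamma^\star|_{[0,a]}\cup\xi\cup\gamma^\star|_{[b,\infty)}$; the clamped conditions defining $\widetilde{\mathcal{A}}(\gamma)$ ensure the junctions are $C^1$, so $\tilde\xi\in\SC$. Using the elementary identity
$$\int_0^{L_\xi}\bigl(1+(\xi_s,e_1)\bigr)\,ds \;=\; L_\xi + \bigl(\gamma^\star(b)-\gamma^\star(a),\,e_1\bigr),$$
which sees only the length and the endpoints of the middle piece, a short bookkeeping splitting the integrals over the three subintervals yields
$$\tilde{\ef{E}}^\lambda[\tilde\xi] - \tilde{\ef{E}}^\lambda[\gamma^\star] \;=\; \ef{E}^\lambda[\xi] - \ef{E}^\lambda[\gamma].$$
By Theorem \ref{thm:uniqminfbpborderline}, $\gamma^\star$ minimises $\tilde{\ef{E}}^\lambda$ on $\SC$, so the right-hand side is nonnegative; this gives minimality of $\gamma$ for $\ef{E}^\lambda$ on $\widetilde{\mathcal{A}}(\gamma)$.

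For uniqueness, if $\ef{E}^\lambda[\xi]=\ef{E}^\lambda[\gamma]$ then $\tilde\xi$ is also a minimiser, and Theorem \ref{thm:uniqminfbpborderline} forces $\tilde\xi=M\gamma^\star$ for some $M\in\SM$. If $a>0$, the identity $\tilde\xi|_{[0,a]}=\gamma^\star|_{[0,a]}$ forces $M$ to pointwise fix a nondegenerate piece of $\gamma^\star$, which no nontrivial vertical translation or reflection in $\SM$ can do; if $a=0$, the boundary conditions $\tilde\xi(0)=\gamma^\star(0)=(0,-2\Lambda)^\top\ne 0$ and $\tilde\xi'(0)=e_1$ similarly rule out vertical reflection and any nontrivial vertical translation. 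Hence $M=\mathrm{id}$, and injectivity of $\gamma^\star$ on $[0,\infty)$ gives $L_\xi=b-a$ and $\xi=\gamma$. The final ``in particular'' assertion is then immediate: on $\mathcal{A}(\gamma)\subset\widetilde{\mathcal{A}}(\gamma)$ the lengths coincide, so $\ef{E}^\lambda$-minimality is equivalent to $\ef{B}$-minimality.

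The only substantive step in this plan is the energy-difference identity above, which is essentially the reason behind the specific form of $\tilde{\ef{E}}^\lambda$ in Theorem \ref{thm:uniqminfbpborderline}; once it is in hand, the rest is mostly bookkeeping. The minor point needing care is ruling out nontrivial $M\in\SM$ in the degenerate case $a=0$, but the explicit datum $\gamma_b(0)=(0,-2)^\top\ne 0$ handles this cleanly.
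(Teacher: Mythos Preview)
Your approach is essentially the paper's: both derive the corollary from Theorem~\ref{thm:uniqminfbpborderline} via the cut-and-paste replacement argument of Corollary~\ref{cor:unique_wavelike}, together with the observation (already made inside the proof of Theorem~\ref{thm:uniqminfbpborderline}) that on finite pieces the second term of $\tilde{\ef{E}}^\lambda$ differs from $\lambda\ef{L}$ only by a boundary term, so that $\tilde{\ef{E}}^\lambda$-minimality and $\ef{E}^\lambda$-minimality coincide on $\widetilde{\mathcal{A}}(\gamma)$. Your energy-difference identity is correct, and the overall structure is fine.

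There is one genuine gap in your uniqueness step when $a=0$. You claim the first-order data at $s=0$ ``rule out vertical reflection and any nontrivial vertical translation''. They do not: the map $M_0\in\SM$ given by $(x_1,x_2)\mapsto(x_1,-x_2-4\Lambda)$ (reflection composed with translation) fixes $\gamma^\star(0)=(0,-2\Lambda)^\top$ and preserves the tangent $e_1$ there, so the $s=0$ conditions alone cannot exclude it. The fix is already implicit in your argument: use the tail piece $\gamma^\star|_{[b,\infty)}$, which is always present since $b<\infty$. On $s\ge a+L_\xi$ you have $M\gamma^\star(s)=\gamma^\star(s-L_\xi+b-a)$; comparing second components and letting $s\to\infty$ forces any translation part of $M$ to vanish, and then a reflection would make one side positive and the other negative, a contradiction. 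This single tail argument handles both $a>0$ and $a=0$ uniformly, so you can simply drop the case split. Equivalently, as in the paper's proof of Corollary~\ref{cor:unique_wavelike}, one may observe that $\gamma^\star(a)$ and $\gamma^\star(b)$ have distinct second components (since $-2\Lambda\sech(\cdot/\Lambda)$ is strictly increasing), and fixing both endpoints eliminates all nontrivial $M\in\SM$.
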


\begin{proof}
    By almost the same argument for Corollary \ref{cor:unique_wavelike} we deduce that the subarc $\gamma$ is uniquely minimal for $\tilde{\ef{E}}^\lambda$ in the length-variable admissible set $\widetilde{\mathcal{A}}(\gamma)$.
    This minimality is equivalent to that for $\ef{E}^\lambda=\ef{B}+\lambda\ef{E}$ as already observed in the proof of Theorem \ref{thm:uniqminfbpborderline}.
    Hence the first assertion follows, and also the last assertion follows directly since $\lambda>0$.
\end{proof}

\begin{remark}
    Almost the same energy as our adapted energy was used in previous work \cite{miu20} for a related but different purpose.
    In that work it was represented in terms of the tangential angle $\theta$, namely in the form of a periodic potential perturbed by the Dirichlet energy $\varepsilon^2\int|\theta_s|^2ds + \int(1-\cos\theta) ds$.
    This expression highlights the fact that the borderline elastica can be regarded as a kind of ``transition layer''.
\end{remark}

We are now in a position to complete the proof of our main theorem.

\begin{proof}[Proof of Theorem \ref{thm:main_fixed_length}]
    By Theorem \ref{thm:planar_explicit}, any elastica with non-constant monotone curvature satisfies the assumption of one of Corollaries \ref{cor:unique_wavelike}, \ref{cor:unique_orbitlike}, and \ref{cor:unique_borderline}.
    Hence the desired unique minimality follows.
\end{proof}

\section{Free boundary problems with variable length}

In this section we extend some of previous results to the length-penalised problem.
We first address the wavelike case.
Here a discontinuous transition of minimisers occurs at $\ell=0$, and nontrivial minimisers only appear for $\ell\leq0$.

\begin{theorem}
\label{thm:uniqminfbp}
    Let $\lambda>0$ and $\ell\in\R$ be fixed parameters.
    Then there exists a smooth minimiser $\gamma_{\lambda,\ell}$ of the modified bending energy $\ef{E}^\lambda:\SF_\ell^+\rightarrow(0,\infty)$.

    If $\ell>0$, then any minimiser is a horizontal line segment.
    
    If $\ell\leq0$, then there is a unique parameter $m\in[m_0,1)$ (see Definition \ref{def:supercritical}), depending only on $\ell\sqrt{\lambda}$, such that any minimiser is parametrised by
    \begin{equation}\label{eq:wavelike_formula}
        \gamma_{\lambda,\ell}(s)=\sqrt{\tfrac{4m-2}{\lambda}} M\gamma_w\Big(\sqrt{\tfrac{\lambda}{4m-2}}s,m\Big), \quad s\in \big[ 0,2K(m)\sqrt{\tfrac{4m-2}{\lambda}} \big],
    \end{equation}
    for some $M\in\SM$ (see Lemma \ref{lem:invariancesM}), where $\gamma_w$ is the wavelike elastica in Theorem \ref{thm:planar_explicit}.
    
    Moreover, the inverse of
    \[
    m \mapsto 2\big( 2E(m)-K(m) \big)\sqrt{4m-2}
    \]
    is a strictly decreasing bijective function $\tilde\phi:(-\infty,0]\to[m_0,1)$ such that $\tilde\phi(\ell\sqrt{\lambda}) = m$.
\end{theorem}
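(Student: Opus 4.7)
The strategy is to mimic the argument of Theorem \ref{thm:uniqminfixedL}, with two essential differences: the scaling factor $\Lambda$ is no longer free but is pinned down by the prescribed multiplier $\lambda$ via Lemma \ref{lem:scaling}, and the linear elastica appears as a genuine competitor whenever $\ell>0$ because the length is now variable. Existence follows by the direct method: for $\lambda>0$, the bound $\ef{E}^\lambda \geq \lambda \ef{L}$ controls the length along a minimising sequence, while $\ef{B}$ controls the $W^{2,2}$-seminorm after placing $\gamma_j(0) = (0,0)^\top$ via vertical translation; standard $W^{2,2}$-weak compactness and lower semicontinuity (as in the proof of Theorem \ref{thm:uniqminfbpborderline}) yield a minimiser $\gamma_{\lambda,\ell}$. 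By \eqref{eq:first_variation} and Lemma \ref{lem:noflux}, this minimiser is a smooth $\lambda$-elastica with $k'(0)=k'(L)=0$.

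By Theorem \ref{thm:planar_explicit}, Lemma \ref{lem:scaling} and Remark \ref{rem:multiplier}, I write $\gamma_{\lambda,\ell}(s) = M\Lambda\gamma_*((s+s_0)/\Lambda)$ with $\Lambda^2\lambda=\lambda_*$, where $\lambda_*$ is the multiplier of the base curve $\gamma_*$. Case (III) (borderline) is ruled out because $k_b$ has only one vertex, whereas no-flux demands two. Case (V) (circular) forces $\ell=0$ and $L$ an integer multiple of $2\pi/\sqrt{\lambda}$, and is excluded by the cut-and-paste trick of \cite{MYarXiv2301}: reflecting half of the loop about the horizontal line through a vertex yields an admissible curve with the same energy but a curvature jump, contradicting smoothness. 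Exactly the same cut-and-paste argument (as in the proof of Theorem \ref{thm:uniqminfixedL}) excludes Case (IV) (orbitlike) and reduces Case (II) (wavelike) to a single half-period, since the boundary tangent constraint $\gamma'(0)=\gamma'(L)=e_1$ together with the vertex structure would otherwise force $L=2jK(m)\Lambda$ with $j\geq 2$ and produce a curvature discontinuity upon reflection of the last half-period.

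For Case (II), since $\gamma_w(\cdot,m)$ is a $2(2m-1)$-elastica, the constraint $\Lambda^2\lambda = 2(2m-1)$ forces $m>1/2$ and $\Lambda = \sqrt{(4m-2)/\lambda}$, and computing the horizontal displacement of one half-period directly yields \eqref{eq:wavelike_formula} together with the relation $\ell\sqrt{\lambda} = h(m) := 2(2E(m)-K(m))\sqrt{4m-2}$. When $\ell>0$, Case (I) is also admissible with length $\ell$ and energy $\lambda\ell$; using $\int_0^{2K(m)}\cn^2(\cdot,m)\,ds = \tfrac{2}{m}(E(m)-(1-m)K(m))$ to compute the wavelike energy and simplifying gives
\[
    \ef{E}^\lambda[\gamma_{\lambda,\ell}] - \lambda\ell = \tfrac{8\sqrt{\lambda}}{\sqrt{4m-2}}\big( 2(1-m)E(m) + (3m-2)K(m) \big),
\]
and invoking the classical inequality $E(m) \geq (1-m)K(m)$ (direct integrand comparison) bounds the bracket below by $m(2m-1)K(m)>0$ on $(1/2,1)$, so Case (I) strictly wins.

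For $\ell\leq0$ the linear case is not admissible (degenerate at $\ell=0$), so the wavelike branch is the only candidate, and it remains to show that $h$ restricted to $[m_0,1)$ is a strictly decreasing bijection onto $(-\infty,0]$. The endpoint values $h(m_0)=0$ and $h(1^-)=-\infty$ are immediate. Monotonicity follows by differentiating $h$ using the standard identities $K'(m) = (E-(1-m)K)/(2m(1-m))$ and $E'(m) = (E-K)/(2m)$ and rearranging to a manifestly negative expression on $[m_0,1)$; I expect this monotonicity check, together with the positivity step in Case (II), to be the main technical steps of the proof, though both reduce to elementary identities and inequalities between $E$ and $K$.
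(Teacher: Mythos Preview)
Your argument is correct, but it re-does substantially more work than necessary compared to the paper's proof. The paper's key observation is that any minimiser of $\ef{E}^\lambda$ in $\SF_\ell^+$ is automatically a minimiser of $\ef{B}$ in the fixed-length subclass $\SA^L$ with $L:=\ef{L}[\gamma_{\lambda,\ell}]$, so Theorem~\ref{thm:uniqminfixedL} applies directly and hands you formula~\eqref{eq:wavelike_fixedL_parametrisation} together with relation~\eqref{eqn:monotoneqLfixed}; the scaling factor is then determined by matching multipliers via Lemma~\ref{lem:scaling} and Remark~\ref{rem:multiplier}. This bypasses the entire case-by-case elimination you carry out. Two further simplifications in the paper are worth noting. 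First, the case $\ell>0$ is genuinely trivial: for any $\gamma\in\SF_\ell^+$ one has $\ef{L}[\gamma]\geq (\gamma(L)-\gamma(0),e_1)=\ell$, hence $\ef{E}^\lambda[\gamma]\geq\lambda\ell$ with equality iff $\gamma$ is a horizontal segment; your explicit energy comparison with the wavelike candidate is correct but unnecessary. Second, the monotonicity of $m\mapsto 2(2E(m)-K(m))\sqrt{4m-2}$ on $[m_0,1)$ follows without differentiation: on that interval $2E-K$ is non-positive and strictly decreasing while $\sqrt{4m-2}$ is positive and increasing, so their product is strictly decreasing. What your more hands-on route buys is self-containment (it does not rely on the fixed-length theorem) and an explicit energy gap formula for $\ell>0$; the paper's route buys brevity. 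One small inaccuracy: in Theorem~\ref{thm:uniqminfixedL} the reduction to $j=1$ for Case~(II) is done by scaling, not by cut-and-paste; your reflection-of-the-last-half-period argument does work here, but it is not ``exactly the same'' as what appears there.
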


\begin{proof}
    The case $\ell>0$ is trivial so hereafter we only consider the case $\ell\leq0$.
    
    The existence of a minimiser follows by using the direct method as in the fixed-length case and so we do not repeat it.
    The only difference is that here we use the a-priori bound on the energy $\ef{E}^\lambda$ to uniformly control the length $\ef{L}$ along a minimising sequence, with the assumption $\lambda>0$, instead of having it fixed a-priori.
    
    Let $\gamma_{\lambda,\ell}$ be any minimiser of $\ef{E}^\lambda$ in $\SF_\ell^+$.
    Then $\gamma_{\lambda,\ell}$ is also a minimiser of $\ef{B}$ in the fixed-length subclass $\SA^L$ with
    $$L:=\ef{L}[\gamma_{\lambda,\ell}].$$
    By Theorem \ref{thm:uniqminfixedL} this minimiser must be given by formula \eqref{eq:wavelike_fixedL_parametrisation} with a parameter $m\in(0,1)$ such that \eqref{eqn:monotoneqLfixed} holds.
    On the other hand, $\gamma_{\lambda,\ell}$ is clearly a $\lambda$-elastica, while by Lemma \ref{lem:scaling} and Remark \ref{rem:multiplier} we can compute the multiplier of the RHS of \eqref{eq:wavelike_fixedL_parametrisation} to deduce that
    $\lambda = 2(2m-1)(\tfrac{L}{2K(m)})^{-2}.$ 
    Since $\lambda>0$, we have $m>\frac{1}{2}$ and hence
    $$L=2K(m)\sqrt{\tfrac{4m-2}{\lambda}}.$$
    Substituting this $L$ to \eqref{eq:wavelike_fixedL_parametrisation} and  \eqref{eqn:monotoneqLfixed}, respectively, we obtain \eqref{eq:wavelike_formula} and
    \begin{equation}\nonumber
        2\big( 2E(m)-K(m) \big) \sqrt{4m-2} = \ell\sqrt{\lambda}.
    \end{equation}
    As the RHS is non-positive, so is the LHS, and hence $m\in[m_0,1)$ (see Definition \ref{def:supercritical}).
    For $m\in[m_0,1)$ the map $m \mapsto 2E(m)-K(m)$ is strictly decreasing and has range $(-\infty,0]$.
    Since $m_0>\frac{1}{2}$, the map $m \mapsto 2( 2E(m)-K(m) ) \sqrt{4m-2}$ also has the same property. 
    So the inverse gives the desired map $\tilde{\phi}$, and in particular $m\in[m_0,1)$ is uniquely determined.
\end{proof}

As in the length-fixed problem we obtain the following unique minimality, which only covers critical and supercritical wavelike elasticae as so does Theorem \ref{thm:uniqminfbp}.

\begin{corollary}\label{cor:unique_wavelike_variable}
    Let $\lambda>0$, $m\in[m_0,1)$, and $\gamma$ be a $\lambda$-elastica given by a subarc of a similarity image of the half-period wavelike elastica $\gamma_w(\cdot,m)|_{[0,2K(m)]}$.
    Then $\gamma$ is the unique minimiser of $\ef{E}^\lambda$ in $\widetilde{\mathcal{A}}(\gamma)$.
\end{corollary}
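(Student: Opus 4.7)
The plan is to mirror the proof of Corollary~\ref{cor:unique_wavelike} almost verbatim, with Theorem~\ref{thm:uniqminfbp} in place of Theorem~\ref{thm:uniqminfixedL}, replacing $\ef{B}$ with $\ef{E}^\lambda$ and $\mathcal{A}$ with $\widetilde{\mathcal{A}}$ throughout.

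First I would anchor the argument at a specific base curve. Fix $m\in[m_0,1)$ and consider the half-period $\tilde\gamma:=\gamma_w(\cdot,m)|_{[0,2K(m)]}$. By Remark~\ref{rem:multiplier} this is a $\lambda_0$-elastica with $\lambda_0=2(2m-1)>0$. Setting $\ell:=2(2E(m)-K(m))\leq 0$, one checks directly from Theorem~\ref{thm:planar_explicit} that $\tilde\gamma\in\SF_\ell^+$ and that it coincides (with $\Lambda=1$) with the parametrisation~\eqref{eq:wavelike_formula} at $\lambda=\lambda_0$. Theorem~\ref{thm:uniqminfbp} then identifies $\tilde\gamma$ as the unique minimiser of $\ef{E}^{\lambda_0}$ in $\SF_\ell^+$, up to vertical translations and vertical reflection.

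Next I would observe that $\widetilde{\mathcal{A}}(\tilde\gamma)\subset\SF_\ell^+$, since prescribing both endpoints of $\tilde\gamma$ is strictly more restrictive than only requiring them to lie on the respective support lines, while the tangent vectors $\tilde\gamma'(0)=\tilde\gamma'(2K(m))=e_1$ are consistent with membership of $\SF_\ell^+$. The remaining $\SM$-degeneracy is then killed by fixing the endpoints, because the $y$-coordinates $(\tilde\gamma(0),e_2)=-2\sqrt{m}$ and $(\tilde\gamma(2K(m)),e_2)=2\sqrt{m}$ are opposite nonzero values, so neither a nontrivial vertical translation nor a vertical reflection can fix both. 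Hence $\tilde\gamma$ is in fact the unique minimiser of $\ef{E}^{\lambda_0}$ in $\widetilde{\mathcal{A}}(\tilde\gamma)$.

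A standard cut-and-paste argument then promotes this to unique minimality of any subarc. Given a subarc $\gamma=\tilde\gamma|_{[a,b]}$, if there were $\xi\in\widetilde{\mathcal{A}}(\gamma)$ with $\xi\ne\gamma$ and $\ef{E}^{\lambda_0}[\xi]\leq\ef{E}^{\lambda_0}[\gamma]$, then the concatenation $\eta$ obtained by replacing $\tilde\gamma|_{[a,b]}$ with $\xi$ would lie in $\widetilde{\mathcal{A}}(\tilde\gamma)\setminus\{\tilde\gamma\}$ and satisfy $\ef{E}^{\lambda_0}[\eta]\leq\ef{E}^{\lambda_0}[\tilde\gamma]$ (using additivity of $\ef{E}^{\lambda_0}$ under concatenation together with $W^{2,2}$-regularity across the gluing points, which follows from the $C^1$-matching imposed by $\widetilde{\mathcal{A}}(\gamma)$), contradicting the previous step. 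To reach an arbitrary similarity image with a prescribed $\lambda>0$, I would invoke Lemma~\ref{lem:scaling}: a rescaling by $\Lambda=\sqrt{\lambda_0/\lambda}$ maps $\tilde\gamma$ to a $\lambda$-elastica whose unique minimality passes from $\ef{E}^{\lambda_0}$ to $\ef{E}^{\lambda}$, and composing with any isometry then preserves both the elastica equation and the unique-minimality property.

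The main obstacle, if any, is the bookkeeping in verifying that $\widetilde{\mathcal{A}}(\tilde\gamma)\subset\SF_\ell^+$ and that the scaling identification between $\lambda_0$ and a prescribed $\lambda>0$ is compatible with all claimed invariances; but both are straightforward once Theorem~\ref{thm:uniqminfbp} is in hand. Essentially no new difficulty arises — the heavy lifting sits in Theorem~\ref{thm:uniqminfbp}.
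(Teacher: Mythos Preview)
Your proposal is correct and follows essentially the same route as the paper, which does not write out a separate proof but simply notes that the argument is ``as in the length-fixed problem,'' i.e., a direct adaptation of the proof of Corollary~\ref{cor:unique_wavelike} with Theorem~\ref{thm:uniqminfbp} replacing Theorem~\ref{thm:uniqminfixedL}. Your treatment is in fact more detailed than the paper's, spelling out the inclusion $\widetilde{\mathcal{A}}(\tilde\gamma)\subset\SF_\ell^+$, the elimination of the $\SM$-degeneracy via the distinct endpoint heights, the cut-and-paste step for subarcs, and the passage to arbitrary $\lambda>0$ via Lemma~\ref{lem:scaling}.
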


Before moving on to the orbitlike problem we mention the following

\begin{remark}
    In the proof of Theorem \ref{thm:uniqminfbp} we only used the monotonicity of the function $f:m \mapsto 2( 2E(m)-K(m) ) \sqrt{4m-2}$ on $[m_0,1)$, but the behavior on $(\frac{1}{2},m_0]$ is also useful for other problems.
    Calculating, $f''<0$ on $(\frac{1}{2},1)$ and $f(\frac{1}{2})=f(m_0)=0$ so $f$ takes its maximum $M_*:=\max f\simeq 0.837$ at $m=m_*\simeq 0.628\in(\frac{1}{2},m_0)$.
    We can apply this fact, for example, to classify all critical points of the modified bending energy $\ef{E}^\lambda$ with $\lambda>0$ subject to the so-called pinned boundary condition.
    More precisely, consider the class of curves such that one endpoint is fixed at the origin and the other is at $\ell e_1$ where $\ell\geq0$.
    Then as in the fixed-length case, the curvature of any critical point must vanish at the endpoints.
    This implies that any critical point is either a trivial line segment, or a wavelike elastica given by an isometric image of 
    $$\gamma(s):=N\sqrt{\tfrac{4m-2}{\lambda}}\gamma_w\Big(N^{-1}\sqrt{\tfrac{\lambda}{4m-2}}(s-K(m)),m\Big),\quad s\in[0,2NK(m)],$$ 
    where $m\in(\frac{1}{2},1)$ and $N\in\N$ such that
    $$\Big| 2( 2E(m)-K(m) ) \sqrt{4m-2} \Big|=\ell\sqrt{\lambda}/N.$$
    Finding all pairs of $(m,N)$ is reduced to understanding the behavior of the above function $f$.
    More precisely,
    \begin{itemize}
        \item for $N<\ell\sqrt{\lambda}/M_*$ there is a unique $m\in(m_0,1)$;
        \item for $N=\ell\sqrt{\lambda}/M_*$ there are two solutions, $m=m_*$ and $m\in(m_0,1)$;
        \item for $N>\ell\sqrt{\lambda}/M_*$ there are three solutions, $m\in(\frac{1}{2},m_*)$, $m\in(m_*,m_0)$, and $m\in(m_0,1)$.
    \end{itemize}
    In particular, if we focus on a fixed ``$N$-mode'', then as $\ell\to+0$ the last two solutions ``converge'' to a figure-eight elastica ($m\to m_0$), while the first one ``shrinks'' but after rescaling converges to a free elastica ($m\to \frac{1}{2}$).
    A similar observation is already given in \cite[Section 2.3]{Linner1998} in terms of the limit $\lambda\to+0$.
\end{remark}

Now we turn to the orbitlike case.

\begin{theorem}
\label{thm:uniqminfbporbitlike}
    Let $\lambda>0$ and $\ell<0$ be fixed parameters.
    Then there exists a smooth minimiser $\gamma_{\lambda,\ell}$ of the modified bending energy $\ef{E}^\lambda:\SF_\ell^-\rightarrow(0,\infty)$.
    
    In addition, there is a unique parameter $m\in(0,1)$, depending only on $\ell\sqrt{\lambda}$, such that any minimiser is parametrised by
    \begin{equation}\label{eq:orbitlike_formula}
        \gamma_{\lambda,\ell}(s)=\sqrt{\tfrac{4-2m}{\lambda}} M\gamma_o\Big(\sqrt{\tfrac{\lambda}{4-2m}}s,m\Big), \quad s\in\big[0,K(m)\sqrt{\tfrac{4-2m}{\lambda}}\big],
    \end{equation}
    for some $M\in\SM$ (see Lemma \ref{lem:invariancesM}), where $\gamma_o$ is the orbitlike elastica in Theorem \ref{thm:planar_explicit}.

    Moreover, the inverse of
    \[
    m \mapsto \big( 2E(m)+(m-2)K(m) \big) \frac{\sqrt{4-2m}}{m}.
    \]
    is a strictly decreasing bijective function $\tilde\psi:(-\infty,0)\to(0,1)$ such that $\tilde\psi(\ell\sqrt{\lambda}) = m$.
\end{theorem}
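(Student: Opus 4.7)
The proof will run in close parallel to that of Theorem \ref{thm:uniqminfbp}, with Theorem \ref{thm:uniqminfixedLorbitcase} and Remark \ref{rem:multiplier}(IV) replacing their wavelike counterparts.

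\textbf{Existence.} The direct method applies verbatim on $\SF_\ell^-$: since $\lambda>0$, the bound $\lambda\ef{L}[\gamma_j]\le\ef{E}^\lambda[\gamma_j]$ controls length along a minimising sequence, so after vertical translation to fix $\gamma_j(0)=(0,0)^\top$ we extract a $W^{2,2}$-weakly convergent subsequence. Lower semicontinuity of $\ef{E}^\lambda$ and closedness of the admissible set yield a minimiser, which is a critical point of $\ef{E}^\lambda$ and hence a smooth $\lambda$-elastica by Lemma \ref{lem:noflux}.

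\textbf{Reduction and parametrisation.} Let $\gamma_{\lambda,\ell}$ be any minimiser and set $L:=\ef{L}[\gamma_{\lambda,\ell}]$. Then $\gamma_{\lambda,\ell}$ must minimise $\ef{B}$ in $\SB^L$, so by Theorem \ref{thm:uniqminfixedLorbitcase} it takes the form \eqref{eq:orbitlike_fixedL_parametrisation} with scaling factor $\Lambda=L/K(m)$ for some uniquely determined $m\in(0,1)$. Since $\gamma_o(\cdot,m)$ is a $2(2-m)$-elastica (Remark \ref{rem:multiplier}(IV)), Lemma \ref{lem:scaling} identifies the multiplier of $\gamma_{\lambda,\ell}$ as $2(2-m)\Lambda^{-2}$. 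Equating this to $\lambda$ pins down $L=K(m)\sqrt{(4-2m)/\lambda}$. Substituting into \eqref{eq:orbitlike_fixedL_parametrisation} produces \eqref{eq:orbitlike_formula}, and substituting into \eqref{eqn:monotoneqLfixedorbitlike} gives the scale-invariant identity
$$\ell\sqrt{\lambda}=\frac{2E(m)+(m-2)K(m)}{m}\sqrt{4-2m}=:H(m).$$

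\textbf{Monotonicity (the main obstacle).} The remaining, and only genuinely new, step is to show that $H$ is a strictly decreasing bijection $(0,1)\to(-\infty,0)$. Factor $H(m)=f(m)\cdot K(m)\sqrt{4-2m}$, where $f(m):=(2E(m)+(m-2)K(m))/(mK(m))$ is the function already shown in Theorem \ref{thm:uniqminfixedLorbitcase} to be strictly decreasing from $0$ (at $m=0^+$) to $-1$ (at $m=1^-$) and strictly negative on $(0,1)$. It therefore suffices to prove that $g(m):=K(m)\sqrt{4-2m}$ is strictly increasing on $(0,1)$, for then $|f|\cdot g$ is positive and strictly increasing and so $H=-|f|\cdot g$ is strictly decreasing. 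Using the standard identities $K'=(E-(1-m)K)/(2m(1-m))$ and $E'=(E-K)/(2m)$, the inequality $g'(m)>0$ reduces after simplification to $(2-m)E(m)>(2-2m)K(m)$. Setting $h(m):=(2-m)E(m)-(2-2m)K(m)$, one has $h(0)=0$ and a direct calculation (in which the terms involving the derivatives of $K$ and $E$ collapse) gives $h'(m)=\tfrac{3}{2}(K(m)-E(m))>0$ on $(0,1)$, whence $h>0$ on $(0,1)$. The boundary limits $H(0^+)=0$ and $H(1^-)=-\infty$ follow from $f(0^+)=0$, $f(1^-)=-1$, $g(0^+)=\pi$, and $g(1^-)=\infty$. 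Inverting $H$ yields the strictly decreasing bijection $\tilde\psi:(-\infty,0)\to(0,1)$ and completes the uniqueness of $m$, and hence of $\gamma_{\lambda,\ell}$ up to the residual action of $\SM$.
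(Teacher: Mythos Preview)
Your proof is correct and follows the same overall architecture as the paper (existence by the direct method, reduction to the fixed-length problem via Theorem \ref{thm:uniqminfixedLorbitcase}, determination of the scaling via Lemma \ref{lem:scaling} and Remark \ref{rem:multiplier}, and finally monotonicity of the resulting function of $m$).

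The one substantive difference is how you establish the monotonicity of $H(m)=(2E(m)+(m-2)K(m))\sqrt{4-2m}/m$. The paper differentiates $H$ directly, obtaining
\[
H'(m)=-\frac{(m^2-8m+8)E(m)-(4m^2-12m+8)K(m)}{(1-m)m^2\sqrt{4-2m}},
\]
and then controls the numerator via the inequality $E\ge\sqrt{1-m}\,K$ together with the algebraic identity $(m^2-8m+8)^2(1-m)-(4m^2-12m+8)^2=m^4(1-m)$. Your route instead factors $H=f\cdot g$, recycles the monotonicity of $f$ already proved in Theorem \ref{thm:uniqminfixedLorbitcase}, and reduces matters to showing $g(m)=K(m)\sqrt{4-2m}$ is increasing; this in turn boils down to the pleasant identity $h'(m)=\tfrac{3}{2}(K-E)$ for $h(m)=(2-m)E-(2-2m)K$. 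Your argument is somewhat more economical in that it reuses prior work and avoids the slightly opaque polynomial factorisation; the paper's argument is self-contained at this step and yields an explicit closed form for $H'$. Both are valid and lead to the same conclusion.
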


\begin{proof}
    The existence of a minimiser again follows by using the direct method, so we only argue about uniqueness.

    Let $\gamma_{\lambda,\ell}$ be any minimiser of $\ef{E}^\lambda$ in $\SF_\ell^-$.
    Then $\gamma_{\lambda,\ell}$ is also a minimiser of $\ef{B}$ in the fixed-length subclass $\SB^L$ with $L:=\ef{L}[\gamma_{\lambda,\ell}]$.
    By Theorem \ref{thm:uniqminfixedL} this minimiser must be given by formula \eqref{eq:orbitlike_fixedL_parametrisation} with a parameter $m\in(0,1)$ such that \eqref{eqn:monotoneqLfixedorbitlike} holds.
    On the other hand, $\gamma_{\lambda,\ell}$ is clearly a $\lambda$-elastica, while by Lemma \ref{lem:scaling} and Remark \ref{rem:multiplier} we can compute the multiplier of the RHS of \eqref{eq:orbitlike_fixedL_parametrisation} to deduce that
    $\lambda = 2(2-m)(\tfrac{L}{K(m)})^{-2}.$ 
    Hence 
    $$L=K(m)\sqrt{\tfrac{4-2m}{\lambda}}.$$
    Substituting this $L$ to \eqref{eq:orbitlike_fixedL_parametrisation} and \eqref{eqn:monotoneqLfixedorbitlike}, respectively, we obtain \eqref{eq:orbitlike_formula} and
    \[
    \big(2E(m) + (m-2)K(m)\big)\frac{\sqrt{4-2m}}{m} = \ell\sqrt{\lambda}.
    \]
    Let $f$ denote the function on the LHS of the above equation.
    We calculate
    \begin{align}
     f' = -\frac{(m^2 - 8 m + 8) E - (4m^2 - 12 m + 8) K}{(1-m) m^2 \sqrt{4 - 2m}}.
    \label{EQmagic}
    \end{align}
    Since $E - \sqrt{1-m}\,K \ge 0$
    (to see this note that it is zero at $m=0$ and the derivative is positive, cf.\ proof of Theorem \ref{thm:uniqminfixedLorbitcase}) we find
    \begin{equation}\label{EQmagic2}
    \begin{split}
            (m^2-8m+8)E
    &- (4m^2-12m+8)K
    \\
    &\ge \Big( (m^2-8m+8)\sqrt{1-m}
        - (4m^2-12m+8)\Big)K
        .
    \end{split}
    \end{equation}
    Furthermore, since
    \[
    (m^2-8m+8)^2(1-m)
        - (4m^2-12m+8)^2
        = m^4(1-m) > 0,
    \]
    we find $(m^2-8m+8)\sqrt{1-m} > (4m^2-12m+8)$.
    Using this in \eqref{EQmagic} and \eqref{EQmagic2} yields $f'<0$.
    Thus noting $f'(0+)=0$ we conclude that $m$ is unique and the inverse of $f$ gives the desired $\tilde{\psi}$.
\end{proof}

\begin{remark}
    The case $\ell\geq0$ can also be treated as in Remark \ref{rem:orbit_ell_0_positive}.
\end{remark}

\begin{corollary}\label{cor:unique_orbitlike_variable}
    Let $\lambda>0$, $m\in(0,1)$, and $\gamma$ be a $\lambda$-elastica given by a subarc of a similarity image of the half-period orbitlike elastica $\gamma_o(\cdot,m)|_{[0,K(m)]}$.
    Then $\gamma$ is the unique minimiser of $\ef{E}^\lambda$ in $\widetilde{\mathcal{A}}(\gamma)$.
\end{corollary}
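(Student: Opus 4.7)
The plan is to mimic the strategy used for Corollary \ref{cor:unique_wavelike_variable}, which itself follows the template of Corollary \ref{cor:unique_orbitlike} in the fixed-length setting but invokes the length-variable Theorem \ref{thm:uniqminfbporbitlike} in place of Theorem \ref{thm:uniqminfixedLorbitcase}. The argument splits into two reductions: first I would establish unique minimality for the ``full'' $\lambda$-elastica $\tilde{\gamma}$ obtained as a similarity image of $\gamma_o(\cdot,m)|_{[0,K(m)]}$, and then deduce unique minimality for arbitrary subarcs by a cut-and-paste argument.

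For the first reduction, fix $\tilde{\gamma}$ to be such a similarity image, translated so that $(\tilde{\gamma}(0),e_1)=0$; by Lemma \ref{lem:scaling} and Remark \ref{rem:multiplier} the scaling factor is uniquely determined by $\lambda$, and this is precisely the parametrisation \eqref{eq:orbitlike_formula} up to a vertical translation or reflection. Then $\ell:=(\tilde{\gamma}(L),e_1)<0$ coincides with the horizontal extent produced by Theorem \ref{thm:uniqminfbporbitlike}, which states that $\tilde{\gamma}$ is the unique minimiser of $\ef{E}^\lambda$ in $\SF_\ell^-$ modulo the action of $\SM$. Since the clamped endpoint conditions defining $\widetilde{\mathcal{A}}(\tilde{\gamma})$ strictly refine the free boundary conditions, we have $\widetilde{\mathcal{A}}(\tilde{\gamma})\subset\SF_\ell^-$, so it suffices to check that fixing both endpoints of $\tilde{\gamma}$ kills all invariances in $\SM$. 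Fixing $\tilde{\gamma}(0)$ eliminates vertical translations, and the direct computation
\[
(\gamma_o(0,m),e_2) = -\tfrac{2}{m} \neq -\tfrac{2\sqrt{1-m}}{m} = (\gamma_o(K(m),m),e_2)
\]
from the explicit formula in Theorem \ref{thm:planar_explicit} shows that additionally fixing $\tilde{\gamma}(L)$ excludes the vertical reflection. Hence $\tilde{\gamma}$ is the unique minimiser of $\ef{E}^\lambda$ in $\widetilde{\mathcal{A}}(\tilde{\gamma})$.

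For the second reduction, let $\gamma$ be any subarc of $\tilde{\gamma}$ and assume for contradiction that there is $\xi\in\widetilde{\mathcal{A}}(\gamma)$ with $\xi\neq\gamma$ and $\ef{E}^\lambda[\xi]\leq\ef{E}^\lambda[\gamma]$. Since $\xi$ agrees with $\gamma$ at its endpoints up to first order, replacing the corresponding subarc of $\tilde{\gamma}$ by $\xi$ yields a $W^{2,2}$-curve $\tilde{\xi}\in\widetilde{\mathcal{A}}(\tilde{\gamma})$ with $\tilde{\xi}\neq\tilde{\gamma}$ and $\ef{E}^\lambda[\tilde{\xi}]\leq\ef{E}^\lambda[\tilde{\gamma}]$, contradicting the unique minimality just established. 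The only non-trivial verification in this plan is the distinctness of the $e_2$-coordinates of the two endpoints of the half-period displayed above, which is the main conceptual obstacle but reduces to an elementary evaluation of the explicit parametrisation.
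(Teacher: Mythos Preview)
Your proposal is correct and follows essentially the same approach as the paper, which leaves this corollary unproved but expects the reader to transplant the argument of Corollary~\ref{cor:unique_wavelike} (prove unique minimality of the full half-period via the free-boundary theorem, eliminate the $\SM$-freedom by checking that the two endpoint heights differ, then extend to subarcs by cut-and-paste, and finally invoke invariance under isometry). Your explicit verification $(\gamma_o(0,m),e_2)=-\tfrac{2}{m}\neq -\tfrac{2\sqrt{1-m}}{m}=(\gamma_o(K(m),m),e_2)$ is exactly the orbitlike analogue of the paper's computation for $\gamma_w$.
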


\begin{proof}[Proof of Theorem \ref{thm:main_length_variable}]
    The proof is almost same as that of Theorem \ref{thm:main_fixed_length}.
    Here we use Corollaries \ref{cor:unique_wavelike_variable}, \ref{cor:unique_orbitlike_variable}, and \ref{cor:unique_borderline}.
\end{proof}

\section{Straightening problem}

In this final section we apply Theorem \ref{thm:main_length_variable} to prove Theorem \ref{thm:straightening}.
Our argument follows the general line of \cite{miu20}, thus going through the corresponding length-penalised problem.
For this application it is important to first check the following lemma, which is almost a direct consequence of the singular limit analysis in \cite{miu20}.
We introduce the variable-length admissible set 
$$\mathcal{A}_{\theta_0,\theta_1,\ell}:=\bigcup_{L>\ell}\mathcal{A}_{\theta_0,\theta_1,\ell}^L$$ 
by using the fixed-length admissible set $\mathcal{A}_{\theta_0,\theta_1,\ell}^L$ defined in \eqref{eq:admissible_clamped}.
Also, just for compatibility with \cite{miu20} we define the notation 
$$\ef{E}_\varepsilon:=\varepsilon^2\ef{B}+\ef{L}.$$
Clearly, minimisation of $\ef{E}_\varepsilon$ is equivalent for that of $\ef{E}^\lambda$ with $\lambda=1/\varepsilon^2$.

\begin{lemma}\label{lem:straightening_supercritical}
    Let $\ell>0$ and $\theta_0,\theta_1\in(-\pi,\pi)$ with $\theta_0\theta_1>0$.
    Then there exists $\varepsilon^*>0$ with the following property: Let $\varepsilon\in(0,\varepsilon^*)$ and $\gamma$ be any minimiser of the energy $\ef{E}_\varepsilon$ in the set $\mathcal{A}_{\theta_0,\theta_1,\ell}$.
    Then $\gamma$ is a supercritical wavelike elastica with monotone curvature.
\end{lemma}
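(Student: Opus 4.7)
The plan is to invoke the detailed singular limit analysis of \cite{miu20}, which treated precisely the length-penalised problem $\ef{E}_\varepsilon$ in the straightening regime $\varepsilon\to 0$. A key consequence of that analysis, valid for any $\theta_0,\theta_1\in(-\pi,\pi)\setminus\{0\}$, is the following asymptotic picture: every minimiser $\gamma_\varepsilon$ of $\ef{E}_\varepsilon$ in $\mathcal{A}_{\theta_0,\theta_1,\ell}$ converges uniformly in the bulk to the horizontal segment joining $(0,0)$ and $(\ell,0)$, while after an $\varepsilon$-rescaling centred at each endpoint, $\gamma_\varepsilon$ converges smoothly to a half borderline elastica $\gamma_B^{\theta_0}$ (respectively $\gamma_B^{\theta_1}$).

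Starting from this, I would first observe that any minimiser is a smooth $(1/\varepsilon^2)$-elastica, so Theorem \ref{thm:planar_explicit} forces $\gamma_\varepsilon$ to lie in one of the classes (I)--(V). I would then rule out (I), (III), (IV), (V) in turn: class (I) is incompatible with $\theta_0\ne 0$; class (V) has constant nonzero curvature, contradicting the decay of $|k|$ in the bulk; classes (III) and (IV) have curvature of one sign, hence strictly monotone tangential angle, but in the non-convex regime $\theta_0\theta_1>0$ (WLOG both positive) the bulk convergence to a horizontal segment means the tangential angle must first decrease from $\theta_0>0$ to near $0$ and then increase back to $\theta_1>0$, which contradicts monotonicity. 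Hence $\gamma_\varepsilon$ is of wavelike type (II); say $\gamma_\varepsilon$ is a subarc of a similarity image of $\gamma_w(\cdot,m)$ for some $m\in(0,1)$.

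Next, since $k_w(\cdot,m)=2\sqrt{m}\,\cn(\cdot,m)$ is strictly monotone on each half-period of length $2K(m)$, and the asymptotic profile above shows that the curvature of $\gamma_\varepsilon$ has exactly one sign change (occurring at the transition between the two boundary layers and the bulk), the parameter interval of $\gamma_\varepsilon$ must be contained in a single such half-period. This yields the strict monotonicity of the curvature. Supercriticality then follows because the borderline limit for wavelike elasticae corresponds to $m\to 1$: the $\varepsilon$-rescaled convergence to $\gamma_B^{\theta_0}$ forces the associated modular parameter $m=m_\varepsilon$ to tend to $1$ as $\varepsilon\to 0$, so in particular $m_\varepsilon>m_0$ for all sufficiently small $\varepsilon$, which is the definition of supercriticality.

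The main obstacle is to extract the required structural statements precisely from \cite{miu20}. That reference establishes the singular limit picture for generic boundary angles, while the uniqueness of minimisers was obtained only under $\theta_0\theta_1<0$; so the non-convex case at hand relies on the \emph{structural} (rather than uniqueness) part of the analysis. Collecting the three consequences I use above (wavelike type, a single sign change of the curvature, and $m_\varepsilon\to 1$) from the statements and proofs in \cite{miu20} should be essentially bookkeeping, as they are direct readouts of the common limit profile.
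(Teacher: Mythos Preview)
Your overall strategy---invoke the singular-limit picture from \cite{miu20}, identify the type, then pin down the parameter range---matches the paper's. But there is a genuine gap in your monotonicity step. The implication ``exactly one sign change of $k=2\sqrt{m}\,\cn(\cdot,m)$ $\Rightarrow$ the parameter interval lies in a single monotonicity half-period of length $2K(m)$'' is false: the interval $[-K/2,3K/2]$ contains exactly one zero of $\cn$ (at $s=K$) yet straddles the local maximum at $s=0$, so the curvature is not monotone there. One sign change only confines $[a,b]$ to an interval of the form $(-K,3K)$ up to periodicity. The paper closes this gap by using the boundary-layer convergence more specifically: since the rescaled limit at each endpoint is a borderline elastica, whose absolute curvature $|k_b|=2\sech$ is strictly decreasing away from its unique vertex, one deduces that $|k|$ of $\gamma_\varepsilon$ is decreasing near $s=0$ and increasing near $s=L$; together with the single sign change this forces $[a,b]\subset[0,2K(m)]$.

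Two smaller points of comparison. For supercriticality the paper argues quantitatively rather than via $m_\varepsilon\to1$: the multiplier $1/\varepsilon^2$ fixes the scale $\Lambda=\varepsilon\sqrt{4m-2}$ (Lemma~\ref{lem:scaling}, Remark~\ref{rem:multiplier}), so the half-period $2\varepsilon K(m)\sqrt{4m-2}$ must exceed $\ef{L}[\gamma]>\ell>0$, whence $K(m)\to\infty$ and $m>m_0$ for small $\varepsilon$. And for ruling out types (III)--(V) the paper simply cites \cite[Theorem 2.4]{miu20} for the single sign change, which only wavelike elasticae admit; your tangential-angle argument also works but is less direct and needs care with winding (a monotone angle from $\theta_0$ to $\theta_1+2\pi$ is a priori compatible with a horizontal bulk).
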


\begin{proof}
    By \cite[Theorem 2.4]{miu20} there is a (small) $\varepsilon^*>0$ such that for any $\varepsilon\in(0,\varepsilon^*)$, any minimiser $\gamma$ is an elastica with exactly one sign change in its curvature.
    By classification (Theorem \ref{thm:planar_explicit}), up to reparametrisation and similarity, the curve must be of the form $\gamma=\gamma_w(\cdot,m)|_{[a,b]}$ for some subinterval $[a,b]\subset(-K(m),3K(m))$ and some parameter $m\in(0,1)$.
    Since $\gamma$ has multiplier $1/\varepsilon^2>0$, we have $m\in(\frac{1}{2},1)$.
    
    Thanks to the boundary layer analysis \cite[Theorem 2.2 (1)]{miu20} which ensures a rescaled convergence to a suitable borderline elastica as $\varepsilon\to0$, taking smaller $\varepsilon^*$ if necessary, we may also assume that the absolute curvature $|k|$ of $\gamma$ is decreasing (resp.\ increasing) in a neighborhood of the endpoint $s=0$ (resp.\ $s=\ef{L}[\gamma]$).
    This implies that the above subinterval must be contained in $[a,b]\subset[0,2K(m)]$ since $\gamma_w$ has absolute curvature $2\sqrt{m}|\cn(s,m)|$, which is increasing on $(-K(m),0)$ and $(K(m),2K(m))$ as well as decreasing on $(0,K(m))$ and $(2K(m),3K(m))$ (see e.g.\ the derivative formula for $\cn$ in \cite[Proposition A.23]{Miura2024survey}).
    Therefore, $\gamma$ is a wavelike elastica with monotone curvature.

    Finally we prove that $\gamma$ is supercritical in the sense of Definition \ref{def:supercritical}.
    Note that $\gamma$ has curvature of the form $2\sqrt{m}\Lambda^{-1}\cn(\Lambda^{-1}s+s_0,m)$, and as in the proof of formula \eqref{eq:orbitlike_formula} we can determine the scaling factor $\Lambda=\varepsilon\sqrt{4m-2}$.
    This function has period $4K(m)\Lambda=4\varepsilon K(m)\sqrt{4m-2}$.
    Since $\gamma$ is contained in a half period, meaning that $2\varepsilon K(m)\sqrt{4m-2}\geq \ef{L}[\gamma]$, and since the length has the bound $\ef{L}[\gamma]>\ell$, we have
    $$2\varepsilon K(m)\sqrt{4m-2}> \ell>0.$$
    Therefore, if $\varepsilon$ is sufficiently small, $K(m)$ needs to be so large that $m>m_0$.
\end{proof}

Now the key uniqueness is easy to verify.

\begin{corollary}\label{cor:straightening_uniqueness}
    Let $\ell>0$ and $\theta_0,\theta_1\in(-\pi,\pi)$ with $\theta_0\theta_1\neq0$.
    Then there exists $\varepsilon^*>0$ such that for any $\varepsilon\in(0,\varepsilon^*)$ the energy $\ef{E}_\varepsilon$ has a unique minimiser $\gamma_\varepsilon$ in the set $\mathcal{A}_{\theta_0,\theta_1,\ell}$.
\end{corollary}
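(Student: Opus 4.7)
The plan is to dichotomize on the sign of $\theta_0\theta_1$ and reduce to existing uniqueness machinery. The convex case $\theta_0\theta_1<0$ has already been settled in \cite{miu20} via explicit singular-limit analysis, so I would appeal directly to that work and concentrate on the non-convex case $\theta_0\theta_1>0$.

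For the non-convex case, the first step is to shrink $\varepsilon^*$ so that Lemma \ref{lem:straightening_supercritical} applies: then any minimiser $\gamma$ of $\ef{E}_\varepsilon$ in $\mathcal{A}_{\theta_0,\theta_1,\ell}$ is automatically a supercritical wavelike elastica with non-constant, monotone signed curvature. Since $\gamma$ is a $\lambda$-elastica with $\lambda=1/\varepsilon^2>0$ and supercriticality excludes the exceptional subcritical class, every hypothesis of Theorem \ref{thm:main_length_variable} is satisfied by such $\gamma$.

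The second step is a short inclusion argument. Theorem \ref{thm:main_length_variable} delivers $\gamma$ as the unique minimiser of $\ef{E}^\lambda$ on the enlarged class $\widetilde{\mathcal{A}}(\gamma)$. Every $\tilde\gamma\in\mathcal{A}_{\theta_0,\theta_1,\ell}$ satisfies the clamped boundary condition \eqref{eq:clamped_BC}, hence agrees with $\gamma$ at both endpoints up to first order, so $\mathcal{A}_{\theta_0,\theta_1,\ell}\subset\widetilde{\mathcal{A}}(\gamma)$. Using the equivalence $\ef{E}_\varepsilon=\varepsilon^2\ef{E}^{1/\varepsilon^2}$, any other minimiser $\gamma'$ of $\ef{E}_\varepsilon$ in $\mathcal{A}_{\theta_0,\theta_1,\ell}$ is also a minimiser of $\ef{E}^\lambda$ in $\widetilde{\mathcal{A}}(\gamma)$, hence $\gamma'=\gamma$. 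Existence of at least one minimiser is standard by the direct method, with the $\varepsilon^2\ef{B}$-term controlling length along a minimising sequence.

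The difficulty is not in this bookkeeping but in the two inputs already in place: Lemma \ref{lem:straightening_supercritical} confines minimisers to the supercritical wavelike class, and Theorem \ref{thm:main_length_variable} rules out any distinct length-changing competitor of equal energy. Before Theorem \ref{thm:main_length_variable}, exactly this latter rigidity was the obstruction flagged in \cite{miu20}; the length-variable formulation is essential here, since the clamped problem naturally admits competitors of differing lengths and the fixed-length Theorem \ref{thm:main_fixed_length} alone would not compare them.
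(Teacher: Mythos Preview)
Your proposal is correct and follows essentially the same approach as the paper: split on the sign of $\theta_0\theta_1$, cite \cite{miu20} for the convex case, and for $\theta_0\theta_1>0$ combine Lemma \ref{lem:straightening_supercritical} with Theorem \ref{thm:main_length_variable}. The only minor slip is the phrase ``with the $\varepsilon^2\ef{B}$-term controlling length''---it is the $\ef{L}$-term in $\ef{E}_\varepsilon=\varepsilon^2\ef{B}+\ef{L}$ that directly bounds the length along a minimising sequence---but this does not affect the argument.
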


\begin{proof}
    If $\theta_0\theta_1>0$, then this directly follows by Lemma \ref{lem:straightening_supercritical} combined with Theorem \ref{thm:main_length_variable}, while the case of $\theta_0\theta_1<0$ is already treated in \cite[Theorem 2.6]{miu20}.
\end{proof}

Armed with this uniqueness, we can prove Theorem \ref{thm:straightening} in the case of $\theta_0\theta_1>0$ in the completely same way as the proof in \cite[Section 6]{miu20} for the case of $\theta_0\theta_1<0$.
Here we give a refined argument for the control of the length of minimisers of $\ef{E}_\varepsilon$, and then apply it to complete the proof of Theorem \ref{thm:straightening}.

\begin{lemma}\label{lem:length_strict}
    Let $\ell\geq0$ and $\theta_0,\theta_1\in(-\pi,\pi]$ such that $\mathcal{A}_{\theta_0,\theta_1,\ell}$ does not admit a straight segment.
    Let $0<\varepsilon_0<\varepsilon_1$.
    For $j=0,1$, let $\gamma_j$ be any minimiser of $\ef{E}_{\varepsilon_j}$ in $\mathcal{A}_{\theta_0,\theta_1,\ell}$.
    Then $\ef{L}[\gamma_0]<\ef{L}[\gamma_1]$.
\end{lemma}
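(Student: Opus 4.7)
My approach is the classical ``swap the competitors'' trick, which is available because $\gamma_0$ and $\gamma_1$ both lie in the common class $\mathcal{A}_{\theta_0,\theta_1,\ell}$ and hence each is an admissible competitor for the other's minimisation problem. First I would write down the two optimality inequalities
\begin{align*}
    \varepsilon_0^2\ef{B}[\gamma_0]+\ef{L}[\gamma_0] &\leq \varepsilon_0^2\ef{B}[\gamma_1]+\ef{L}[\gamma_1],\\
    \varepsilon_1^2\ef{B}[\gamma_1]+\ef{L}[\gamma_1] &\leq \varepsilon_1^2\ef{B}[\gamma_0]+\ef{L}[\gamma_0],
\end{align*}
add them, and cancel the length terms to obtain $(\varepsilon_1^2-\varepsilon_0^2)(\ef{B}[\gamma_1]-\ef{B}[\gamma_0])\leq 0$. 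Since $\varepsilon_0<\varepsilon_1$ this forces $\ef{B}[\gamma_0]\geq \ef{B}[\gamma_1]$, and substituting this bound back into the first inequality gives the weak monotonicity $\ef{L}[\gamma_0]\leq \ef{L}[\gamma_1]$ for free.

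The hard part will be upgrading this to a strict inequality, and this is precisely where the ``no straight segment'' assumption on $\mathcal{A}_{\theta_0,\theta_1,\ell}$ should come in. I would argue by contradiction: supposing $\ef{L}[\gamma_0]=\ef{L}[\gamma_1]$, the first optimality inequality then gives $\ef{B}[\gamma_0]\leq \ef{B}[\gamma_1]$, which combined with the reverse bound forces $\ef{B}[\gamma_0]=\ef{B}[\gamma_1]$. Both original inequalities then collapse to equalities, and in particular $\gamma_0$ also realises the minimum of $\ef{E}_{\varepsilon_1}$ over $\mathcal{A}_{\theta_0,\theta_1,\ell}$. Being a minimiser of both $\ef{E}_{\varepsilon_0}$ and $\ef{E}_{\varepsilon_1}$, the curve $\gamma_0$ is critical for both, and since $\ef{E}_\varepsilon=\varepsilon^2\ef{E}^{\varepsilon^{-2}}$ this means $\gamma_0$ satisfies \eqref{EQelastica} simultaneously with $\lambda=\varepsilon_0^{-2}$ and $\lambda=\varepsilon_1^{-2}$. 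Subtracting the two forms of the Euler--Lagrange equation leaves $(\varepsilon_0^{-2}-\varepsilon_1^{-2})k_{\gamma_0}\equiv 0$, so $k_{\gamma_0}\equiv 0$, which means $\gamma_0$ is a straight segment realising the clamped boundary data. This contradicts the standing hypothesis on $\mathcal{A}_{\theta_0,\theta_1,\ell}$.

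The genuinely delicate step is thus the strict-inequality argument, but the hypothesis is perfectly tailored to it: the only curves capable of being critical for two distinct members of the family $\{\ef{E}_\varepsilon\}$ are the straight segments (which have $k\equiv 0$ and hence satisfy \eqref{EQelastica} for every $\lambda$), and these are exactly what is excluded. Everything else is a mechanical consequence of the affine structure of $\ef{E}_\varepsilon$ in the parameter $\varepsilon^2$, together with the two-competitor comparison. No classification theory or elliptic function machinery should be needed.
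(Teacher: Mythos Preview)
Your proposal is correct and follows essentially the same argument as the paper: swap the competitors to get the two minimality inequalities, deduce the weak bound $\ef{L}[\gamma_0]\le\ef{L}[\gamma_1]$, then argue by contradiction that equality forces $\ef{B}[\gamma_0]=\ef{B}[\gamma_1]$ and hence $\gamma_0$ is simultaneously a $\lambda$-elastica for two distinct $\lambda$, which forces $k\equiv0$ and contradicts the no-straight-segment hypothesis. The only difference is expository---you spell out the add-and-cancel step explicitly where the paper just says ``combining the two inequalities''.
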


\begin{proof}
    We first recall that $\ef{L}[\gamma_0]\leq \ef{L}[\gamma_1]$ (as in \cite[Proposition 6.1]{miu20}), which follows by combining the two inequalities due to minimality:
    \begin{equation}\label{eq:length_strict1}
        \varepsilon_0^2\ef{B}[\gamma_0]+\ef{L}[\gamma_0]\leq\varepsilon_0^2\ef{B}[\gamma_1]+\ef{L}[\gamma_1], \quad \varepsilon_1^2\ef{B}[\gamma_1]+\ef{L}[\gamma_1]\leq\varepsilon_1^2\ef{B}[\gamma_0]+\ef{L}[\gamma_0].
    \end{equation}
    Now we prove that $\ef{L}[\gamma_0]\neq \ef{L}[\gamma_1]$ by contradiction.
    If $\ef{L}[\gamma_0]=\ef{L}[\gamma_1]$, then by \eqref{eq:length_strict1} we further have $\ef{B}[\gamma_0]=\ef{B}[\gamma_1]$, which implies that $\gamma_0$ (resp.\ $\gamma_1$) is also a minimiser of $E_{\varepsilon_1}$ (resp.\ $E_{\varepsilon_0}$) in $\mathcal{A}_{\theta_0,\theta_1,\ell}$.
    Then each of $\gamma_0$ and $\gamma_1$ is both a $(1/\varepsilon_0^2)$-elastica and a $(1/\varepsilon_1^2)$-elastica, but this is impossible since the curvature cannot vanish identically due to the boundary condition.
    We thus obtain a contradiction.
\end{proof}

\begin{corollary}\label{cor:length_behavior}
    Under the situation of Corollary \ref{cor:straightening_uniqueness}, the well-defined function $\varepsilon\mapsto \ef{L}[\gamma_\varepsilon]$ on $(0,\varepsilon^*)$ is strictly increasing and continuous.
    In addition,
    $$\lim_{\varepsilon\to0}\frac{\ef{L}[\gamma_\varepsilon]-\ell}{\varepsilon}=4\sqrt{2}\left(\sin^2\frac{\theta_0}{4}+\sin^2\frac{\theta_1}{4}\right).$$
\end{corollary}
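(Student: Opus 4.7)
The plan is to treat the three claims separately. Strict monotonicity is immediate: the hypothesis $\theta_0\theta_1\neq 0$ rules out any straight segment from $\mathcal{A}_{\theta_0,\theta_1,\ell}$, so Lemma~\ref{lem:length_strict} applies to the (unique, by Corollary~\ref{cor:straightening_uniqueness}) minimisers $\gamma_{\varepsilon_0}$ and $\gamma_{\varepsilon_1}$ and gives $\ef{L}[\gamma_{\varepsilon_0}]<\ef{L}[\gamma_{\varepsilon_1}]$ whenever $0<\varepsilon_0<\varepsilon_1<\varepsilon^*$.

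For continuity at a fixed $\varepsilon_\star\in(0,\varepsilon^*)$ I would use a standard compactness-plus-uniqueness argument. Given $\varepsilon_n\to\varepsilon_\star$, comparison with a single fixed competitor in $\mathcal{A}_{\theta_0,\theta_1,\ell}$ yields uniform bounds on $\ef{L}[\gamma_{\varepsilon_n}]$ and $\ef{B}[\gamma_{\varepsilon_n}]$. After linearly reparametrising each minimiser onto $[0,1]$, a $W^{2,2}$-weak subsequential limit exists; lower semicontinuity of $\ef{B}$ together with the minimising property identifies the limit as a minimiser of $\ef{E}_{\varepsilon_\star}$ in $\mathcal{A}_{\theta_0,\theta_1,\ell}$, which by uniqueness must equal $\gamma_{\varepsilon_\star}$. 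As every subsequence of $\{\varepsilon_n\}$ has the same limit, the full sequence $\ef{L}[\gamma_{\varepsilon_n}]$ converges to $\ef{L}[\gamma_{\varepsilon_\star}]$.

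The asymptotic formula is the main content. The convex case $\theta_0\theta_1<0$ is essentially addressed in \cite{miu20}; in both regimes, the boundary-layer analysis of \cite[Theorem~2.2]{miu20} asserts that, after suitable isometry, the rescaled minimisers $\hat{\gamma}_\varepsilon(s):=\varepsilon^{-1}\gamma_\varepsilon(\varepsilon s)$ locally smoothly converge to $\gamma_B^{\theta_0}$ near $s=0$ and to a reflected copy of $\gamma_B^{\theta_1}$ near the other endpoint. Denoting by $\theta_\varepsilon$ and $\hat\theta_\varepsilon$ the tangential angles of $\gamma_\varepsilon$ and $\hat\gamma_\varepsilon$, the identity
$$\ef{L}[\gamma_\varepsilon]-\ell=\int_0^{\ef{L}[\gamma_\varepsilon]}\bigl(1-\cos\theta_\varepsilon(s)\bigr)\,ds=\varepsilon\int_0^{\ef{L}[\gamma_\varepsilon]/\varepsilon}\bigl(1-\cos\hat\theta_\varepsilon(u)\bigr)\,du$$
reduces matters to integrating $1-\cos\hat\theta_\varepsilon$, whose integrand concentrates at the two boundary layers and decays exponentially in between (a feature inherited from the $\sech$-structure of the borderline elastica, case~(III) of Theorem~\ref{thm:planar_explicit}). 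Passing to the limit yields
$$\lim_{\varepsilon\to 0}\tfrac{1}{\varepsilon}\bigl(\ef{L}[\gamma_\varepsilon]-\ell\bigr)=\sum_{j\in\{0,1\}}\int_0^\infty\bigl(1-\cos\varphi(u+u_0^{(j)})\bigr)\,du,$$
with $\varphi(u)=4\arctan(e^{-u/\sqrt{2}})$ and $u_0^{(j)}:=-\sqrt{2}\log\tan(|\theta_j|/4)$. Each summand is then evaluated explicitly by the identity $\sin(\varphi(u)/2)=\sech(u/\sqrt{2})$: the antiderivative $\int\sech^2(u/\sqrt{2})\,du=\sqrt{2}\tanh(u/\sqrt{2})+C$ together with $\tanh(u_0^{(j)}/\sqrt{2})=\cos(|\theta_j|/2)$ delivers $4\sqrt{2}\sin^2(\theta_j/4)$, giving the claimed limit.

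The main obstacle is the rigorous limit exchange in the second display above. This requires a uniform-in-$\varepsilon$ exponential tail bound on $1-\cos\hat\theta_\varepsilon$, which I would extract by combining the explicit parametric description of $\gamma_\varepsilon$ as a rescaled wavelike elastica of scale $\Lambda=\varepsilon\sqrt{4m(\varepsilon)-2}$ (Lemma~\ref{lem:straightening_supercritical} and Theorem~\ref{thm:uniqminfbp}, with the analogous orbitlike/wavelike description in the convex case) together with the classical asymptotics of $K(m),E(m)$ as $m\to 1^-$; once this bound is in hand, dominated convergence finishes the argument.
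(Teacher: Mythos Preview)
Your proposal is correct and aligns with the paper's approach: the paper's proof simply cites uniqueness (Corollary~\ref{cor:straightening_uniqueness}) together with Lemma~\ref{lem:length_strict} for strict monotonicity, and then invokes \cite[Propositions~6.3 and~6.4]{miu20} for continuity and the asymptotic formula respectively, whereas you are effectively reconstructing the content of those two cited propositions. Your compactness-plus-uniqueness argument for continuity and your boundary-layer computation of $\int_0^\infty(1-\cos\varphi(u+u_0^{(j)}))\,du=4\sqrt{2}\sin^2(\theta_j/4)$ are both correct, and you rightly identify the uniform tail estimate needed for the dominated-convergence step as the one point requiring further work; this is precisely what the reference \cite{miu20} supplies.
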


\begin{proof}
    Uniqueness in Corollary \ref{cor:straightening_uniqueness} implies well-definedness directly, implies strict monotonicity combined with Lemma \ref{lem:length_strict}, implies continuity combined with \cite[Proposition 6.3]{miu20}, and also implies the last equation on the limit combined with \cite[Proposition 6.4]{miu20}.
\end{proof}

\begin{proof}[Proof of Theorem \ref{thm:straightening}]
    By Corollary \ref{cor:length_behavior}, taking the inverse map of $\varepsilon\mapsto \ef{L}[\gamma_\varepsilon]$, we can define a continuous, strictly increasing, bijective function $\tilde{\varepsilon}:(\ell,\ef{L}[\gamma_{\varepsilon^*}])\to(0,\varepsilon^*)$ with
    $$\lim_{L\to \ell}\frac{L-\ell}{\tilde{\varepsilon}(L)}=4\sqrt{2}\left(\sin^2\frac{\theta_0}{4}+\sin^2\frac{\theta_1}{4}\right).$$
    Then for any $L\in(\ell,\ef{L}[\gamma_{\varepsilon^*}])$, a curve $\gamma$ is a minimiser of $\ef{B}$ in $\mathcal{A}_{\theta_0,\theta_1,\ell}^L$ if and only if $\gamma$ is a minimiser of $E_{\tilde{\varepsilon}(L)}$ in $\mathcal{A}_{\theta_0,\theta_1,\ell}$.
    This directly gives a one-to-one correspondence between the fixed-length and length-penalised problems in the straightening regime under consideration.
    Therefore, up to a simple dilation argument as in \cite[Section 6.3]{miu20}, we can directly apply Corollary \ref{cor:straightening_uniqueness} to deduce the uniqueness in Theorem \ref{thm:straightening}, and also apply \cite[Theorems 2.2 and 2.4]{miu20} to deduce property (b) and the rescaled convergence in Theorem \ref{thm:straightening}.
\end{proof}

\bibliographystyle{plain}
\bibliography{monotonecurvature}

@article {miu20,
    AUTHOR = {Miura, Tatsuya},
     TITLE = {Elastic curves and phase transitions},
   JOURNAL = {Math. Ann.},
  FJOURNAL = {Mathematische Annalen},
    VOLUME = {376},
      YEAR = {2020},
    NUMBER = {3-4},
     PAGES = {1629--1674},
      ISSN = {0025-5831,1432-1807},
   MRCLASS = {49Q10 (53A04)},
  MRNUMBER = {4081125},
MRREVIEWER = {Paolo\ Piccione},
       DOI = {10.1007/s00208-019-01821-8},
       URL = {https://doi.org/10.1007/s00208-019-01821-8},
}

@article {MiuraYoshizawa2024AMPA,
    AUTHOR = {Miura, Tatsuya and Yoshizawa, Kensuke},
     TITLE = {Complete classification of planar {$p$}-elasticae},
   JOURNAL = {Ann. Mat. Pura Appl. (4)},
  FJOURNAL = {Annali di Matematica Pura ed Applicata. Series IV},
    VOLUME = {203},
      YEAR = {2024},
    NUMBER = {5},
     PAGES = {2319--2356},
      ISSN = {0373-3114,1618-1891},
   MRCLASS = {49Q10 (33E05 53A04)},
  MRNUMBER = {4797294},
       DOI = {10.1007/s10231-024-01445-z},
       URL = {https://doi.org/10.1007/s10231-024-01445-z},
}

@article {MiuraYoshizawa2024crelle,
    AUTHOR = {Miura, Tatsuya and Yoshizawa, Kensuke},
     TITLE = {General rigidity principles for stable and minimal elastic
              curves},
   JOURNAL = {J. Reine Angew. Math.},
  FJOURNAL = {Journal f\"ur die Reine und Angewandte Mathematik. [Crelle's
              Journal]},
    VOLUME = {810},
      YEAR = {2024},
     PAGES = {253--281},
      ISSN = {0075-4102,1435-5345},
   MRCLASS = {49K05 (49Q20)},
  MRNUMBER = {4739246},
       DOI = {10.1515/crelle-2024-0018},
       URL = {https://doi.org/10.1515/crelle-2024-0018},
}

@phdthesis{born06,
  title={Untersuchungen {\"u}ber die stabilit{\"a}t der elastischen Linie in Ebene und Raum: Unter verschiedenen Grenzbedingungen},
  author={Born, Max},
  year={1906},
  school={G\"ottingen Univerist\"at}
}

@article {sac08b,
    AUTHOR = {Sachkov, Yu. L.},
     TITLE = {Conjugate points in the {E}uler elastic problem},
   JOURNAL = {J. Dyn. Control Syst.},
  FJOURNAL = {Journal of Dynamical and Control Systems},
    VOLUME = {14},
      YEAR = {2008},
    NUMBER = {3},
     PAGES = {409--439},
      ISSN = {1079-2724,1573-8698},
   MRCLASS = {49Q10 (49J15 74G60 74G65 74K10 93B29 93C10)},
  MRNUMBER = {2425306},
MRREVIEWER = {Andrey\ V.\ Sarychev},
       DOI = {10.1007/s10883-008-9044-x},
       URL = {https://doi.org/10.1007/s10883-008-9044-x},
}

@article {sac08a,
    AUTHOR = {Sachkov, Yu. L.},
     TITLE = {Maxwell strata in the {E}uler elastic problem},
   JOURNAL = {J. Dyn. Control Syst.},
  FJOURNAL = {Journal of Dynamical and Control Systems},
    VOLUME = {14},
      YEAR = {2008},
    NUMBER = {2},
     PAGES = {169--234},
      ISSN = {1079-2724,1573-8698},
   MRCLASS = {49Q10 (49J15 49K15 74B20 74K10 74M05 93B29)},
  MRNUMBER = {2390213},
MRREVIEWER = {Andrey\ V.\ Sarychev},
       DOI = {10.1007/s10883-008-9039-7},
       URL = {https://doi.org/10.1007/s10883-008-9039-7},
}

@article {Miura23,
    AUTHOR = {Miura, Tatsuya},
     TITLE = {Li-{Y}au type inequality for curves in any codimension},
   JOURNAL = {Calc. Var. Partial Differential Equations},
  FJOURNAL = {Calculus of Variations and Partial Differential Equations},
    VOLUME = {62},
      YEAR = {2023},
    NUMBER = {8},
     PAGES = {Paper No. 216, 28},
      ISSN = {0944-2669,1432-0835},
   MRCLASS = {53A04 (49Q10 53E40)},
  MRNUMBER = {4631455},
       DOI = {10.1007/s00526-023-02559-7},
       URL = {https://doi.org/10.1007/s00526-023-02559-7},
}

@article {dall2023elastic,
    AUTHOR = {Dall'Acqua, Anna and Deckelnick, Klaus},
     TITLE = {Elastic graphs with clamped boundary and length constraints},
   JOURNAL = {Ann. Mat. Pura Appl. (4)},
  FJOURNAL = {Annali di Matematica Pura ed Applicata. Series IV},
    VOLUME = {203},
      YEAR = {2024},
    NUMBER = {3},
     PAGES = {1137--1158},
      ISSN = {0373-3114,1618-1891},
   MRCLASS = {49J05 (34B15 49J45 53A04 53C21)},
  MRNUMBER = {4754274},
MRREVIEWER = {Graziano\ Crasta},
       DOI = {10.1007/s10231-023-01396-x},
       URL = {https://doi.org/10.1007/s10231-023-01396-x},
}

@article {Maddocks1984,
    AUTHOR = {Maddocks, J. H.},
     TITLE = {Stability of nonlinearly elastic rods},
   JOURNAL = {Arch. Rational Mech. Anal.},
  FJOURNAL = {Archive for Rational Mechanics and Analysis},
    VOLUME = {85},
      YEAR = {1984},
    NUMBER = {4},
     PAGES = {311--354},
      ISSN = {0003-9527},
   MRCLASS = {73H05 (49B22 73K05)},
  MRNUMBER = {746170},
MRREVIEWER = {Jes\'{u}s\ Hern\'{a}ndez},
       DOI = {10.1007/BF00275737},
       URL = {https://doi.org/10.1007/BF00275737},
}

@article{LangerSinger_85,
	author = {Langer, Joel and Singer, David A.},
	date-added = {2022-04-09 17:15:45 +0900},
	date-modified = {2022-04-09 17:15:50 +0900},
	doi = {10.1016/0040-9383(85)90046-1},
	fjournal = {Topology. An International Journal of Mathematics},
	issn = {0040-9383},
	journal = {Topology},
	mrclass = {58E05 (53A04 58F17)},
	mrnumber = {790677},
	mrreviewer = {A. J. Tromba},
	number = {1},
	pages = {75--88},
	title = {Curve straightening and a minimax argument for closed elastic curves},
	url = {https://doi.org/10.1016/0040-9383(85)90046-1},
	volume = {24},
	year = {1985},
	bdsk-url-1 = {https://doi.org/10.1016/0040-9383(85)90046-1}}

@article {miura2021optimal,
    AUTHOR = {Miura, Tatsuya and M\"uller, Marius and Rupp, Fabian},
     TITLE = {Optimal thresholds for preserving embeddedness of elastic
              flows},
   JOURNAL = {Amer. J. Math.},
  FJOURNAL = {American Journal of Mathematics},
    VOLUME = {147},
      YEAR = {2025},
    NUMBER = {1},
     PAGES = {33--80},
      ISSN = {0002-9327,1080-6377},
   MRCLASS = {53E99 (53A04)},
  MRNUMBER = {4861585},
}

@article {MuellerRupp2023,
    AUTHOR = {M\"{u}ller, Marius and Rupp, Fabian},
     TITLE = {A {L}i-{Y}au inequality for the 1-dimensional {W}illmore
              energy},
   JOURNAL = {Adv. Calc. Var.},
  FJOURNAL = {Advances in Calculus of Variations},
    VOLUME = {16},
      YEAR = {2023},
    NUMBER = {2},
     PAGES = {337--362},
      ISSN = {1864-8258,1864-8266},
   MRCLASS = {53A04 (49Q10 53E40)},
  MRNUMBER = {4565935},
MRREVIEWER = {Rongpei\ Huang},
       DOI = {10.1515/acv-2021-0014},
       URL = {https://doi.org/10.1515/acv-2021-0014},
}

@article {Yoshizawa2022,
    AUTHOR = {Yoshizawa, Kensuke},
     TITLE = {The critical points of the elastic energy among curves pinned
              at endpoints},
   JOURNAL = {Discrete Contin. Dyn. Syst.},
  FJOURNAL = {Discrete and Continuous Dynamical Systems. Series A},
    VOLUME = {42},
      YEAR = {2022},
    NUMBER = {1},
     PAGES = {403--423},
      ISSN = {1078-0947,1553-5231},
   MRCLASS = {49K05 (53A04)},
  MRNUMBER = {4349792},
MRREVIEWER = {Erchuan\ Zhang},
       DOI = {10.3934/dcds.2021122},
       URL = {https://doi.org/10.3934/dcds.2021122},
}

@Book{Whittaker1962,
  author    = {Whittaker, E. T. and Watson, G. N.},
  publisher = {Cambridge University Press, New York},
  title     = {A course of modern analysis. {A}n introduction to the general theory of infinite processes and of analytic functions: with an account of the principal transcendental functions},
  year      = {1962},
  series    = {Fourth edition. Reprinted},
  keywords  = {30.00},
  mrnumber  = {0178117},
  pages     = {vii+608},
}

@Book{Abramowitz1992,
  editor    = {Abramowitz, Milton and Stegun, Irene A.},
  publisher = {Dover Publications, Inc., New York},
  title     = {Handbook of mathematical functions with formulas, graphs, and mathematical tables},
  year      = {1992},
  isbn      = {0-486-61272-4},
  note      = {Reprint of the 1972 edition},
  keywords  = {00A20 (00A22 33-00)},
  mrnumber  = {1225604},
  pages     = {xiv+1046},
}

@article {MR4852697,
    AUTHOR = {Miura, Tatsuya and Yoshizawa, Kensuke},
     TITLE = {Pinned planar {$p$}-elasticae},
   JOURNAL = {Indiana Univ. Math. J.},
  FJOURNAL = {Indiana University Mathematics Journal},
    VOLUME = {73},
      YEAR = {2024},
    NUMBER = {6},
     PAGES = {2155--2208},
      ISSN = {0022-2518,1943-5258},
   MRCLASS = {49Q10 (33E05 53A04)},
  MRNUMBER = {4852697},
}

@article {MR4877595,
    AUTHOR = {Miura, Tatsuya and Yoshizawa, Kensuke},
     TITLE = {Variational stabilization of degenerate {$p$}-elasticae},
   JOURNAL = {J. Lond. Math. Soc. (2)},
  FJOURNAL = {Journal of the London Mathematical Society. Second Series},
    VOLUME = {111},
      YEAR = {2025},
    NUMBER = {3},
     PAGES = {Paper No. e70096},
      ISSN = {0024-6107,1469-7750},
   MRCLASS = {99-06},
  MRNUMBER = {4877595},
       DOI = {10.1112/jlms.70096},
       URL = {https://doi.org/10.1112/jlms.70096},
}

@article{SS14,
	author = {Sachkov, Yu. L. and Sachkova, E. F.},
	date-added = {2022-09-20 19:15:25 +0900},
	date-modified = {2022-09-20 19:15:34 +0900},
	doi = {10.1007/s10883-014-9211-1},
	fjournal = {Journal of Dynamical and Control Systems},
	issn = {1079-2724},
	journal = {J. Dyn. Control Syst.},
	mrclass = {49J15 (74B20 74K10 93B27)},
	mrnumber = {3255084},
	mrreviewer = {Andrey V. Sarychev},
	number = {4},
	pages = {443--464},
	title = {Exponential mapping in {E}uler's elastic problem},
	url = {https://doi.org/10.1007/s10883-014-9211-1},
	volume = {20},
	year = {2014},
	bdsk-url-1 = {https://doi.org/10.1007/s10883-014-9211-1}}

@article {Linner1998,
    AUTHOR = {Linn\'{e}r, Anders},
     TITLE = {Curve-straightening and the {P}alais-{S}male condition},
   JOURNAL = {Trans. Amer. Math. Soc.},
  FJOURNAL = {Transactions of the American Mathematical Society},
    VOLUME = {350},
      YEAR = {1998},
    NUMBER = {9},
     PAGES = {3743--3765},
      ISSN = {0002-9947,1088-6850},
   MRCLASS = {58E05 (58E10)},
  MRNUMBER = {1432203},
MRREVIEWER = {Joel\ L.\ Weiner},
       DOI = {10.1090/S0002-9947-98-01977-1},
       URL = {https://doi.org/10.1090/S0002-9947-98-01977-1},
}

@article {FeroneKawohlNitsch2016,
    AUTHOR = {Ferone, Vincenzo and Kawohl, Bernd and Nitsch, Carlo},
     TITLE = {The elastica problem under area constraint},
   JOURNAL = {Math. Ann.},
  FJOURNAL = {Mathematische Annalen},
    VOLUME = {365},
      YEAR = {2016},
    NUMBER = {3-4},
     PAGES = {987--1015},
      ISSN = {0025-5831,1432-1807},
   MRCLASS = {49Q20 (53C44)},
  MRNUMBER = {3521079},
MRREVIEWER = {Anders\ Linn\'{e}r},
       DOI = {10.1007/s00208-015-1284-y},
       URL = {https://doi.org/10.1007/s00208-015-1284-y},
}

@article {BucurHenrot2017,
    AUTHOR = {Bucur, Dorin and Henrot, Antoine},
     TITLE = {A new isoperimetric inequality for elasticae},
   JOURNAL = {J. Eur. Math. Soc. (JEMS)},
  FJOURNAL = {Journal of the European Mathematical Society (JEMS)},
    VOLUME = {19},
      YEAR = {2017},
    NUMBER = {11},
     PAGES = {3355--3376},
      ISSN = {1435-9855,1435-9863},
   MRCLASS = {49Q20 (49Q05 53A04)},
  MRNUMBER = {3713042},
MRREVIEWER = {Bogdan\ D.\ Suceav\u{a}},
       DOI = {10.4171/JEMS/740},
       URL = {https://doi.org/10.4171/JEMS/740},
}

@article{Miura2024survey,
  title={Elastic curves and self-intersections},
  author={Miura, Tatsuya},
  journal={arXiv:2408.03020, to appear in 2024 MATRIX Annals}
}

\end{document}